\newcommand{\Var}{\operatorname{Var}}
\renewcommand{\Pr}{\mathds{P}}
\renewcommand{\E}{\mathds{E}}
\begin{document}


\title*{Statistical Distances and Their Role in Robustness}

\author{Marianthi Markatou, Yang Chen, Georgios Afendras and Bruce G. Lindsay}

\authorrunning{Markatou et al.}

\institute{Marianthi Markatou \at Department of Biostatistics, University at Buffalo, Buffalo, NY, 14214, \\\email{markatou@buffalo.edu}
\and Yang Chen \at Department of Biostatistics, University at Buffalo, Buffalo, NY, 14214, \\\email{ychen57@buffalo.edu}
\and Georgios Afendras \at Department of Biostatistics, University at Buffalo, Buffalo, NY, 14214, \\\email{gafendra@buffalo.edu}
\and Bruce G. Lindsay \at Department of Statistics, The Pennsylvania State University, University Park, PA, 16820, \\\email{bgl@psu.edu}}

\maketitle


\abstract*{Statistical distances, divergences, and similar quantities have a large history and play a fundamental role in statistics, machine learning and associated scientific disciplines. However, within the statistical literature, this extensive role has too often been played out behind the scenes, with other aspects of the statistical problems being viewed as more central, more interesting, or more important. The behind the scenes role of statistical distances shows up in estimation, where we often use estimators based on minimizing a distance, explicitly or implicitly, but rarely studying how the properties of a distance determine the properties of the estimators. Distances are also prominent in goodness-of-fit, but the usual question we ask is \textquotedblleft how powerful is this method against a set of interesting alternatives\textquotedblright \hspace{0.01cm} not \textquotedblleft what aspect of the distance between the hypothetical model and the alternative are we measuring?\textquotedblright \newline\indent
Our focus is on describing the statistical properties of some of the distance measures we have found to be most important and most visible. We illustrate the robust nature of Neyman's chi-squared and the non-robust nature of Pearson's chi-squared statistics and discuss the concept of discretization robustness.}

\abstract{Statistical distances, divergences, and similar quantities have a large history and play a fundamental role in statistics, machine learning and associated scientific disciplines. However, within the statistical literature, this extensive role has too often been played out behind the scenes, with other aspects of the statistical problems being viewed as more central, more interesting, or more important. The behind the scenes role of statistical distances shows up in estimation, where we often use estimators based on minimizing a distance, explicitly or implicitly, but rarely studying how the properties of a distance determine the properties of the estimators. Distances are also prominent in goodness-of-fit, but the usual question we ask is \textquotedblleft how powerful is this method against a set of interesting alternatives\textquotedblright \hspace{0.01cm} not \textquotedblleft what aspect of the distance between the hypothetical model and the alternative are we measuring?\textquotedblright \newline\indent
Our focus is on describing the statistical properties of some of the distance measures we have found to be most important and most visible. We illustrate the robust nature of Neyman's chi-squared and the non-robust nature of Pearson's chi-squared statistics and discuss the concept of discretization robustness.}


\section{Introduction}
\label{secction1}

Distance measures play a ubiquitous role in statistical theory and thinking. However, within the statistical literature this extensive role has too often been played out behind the scenes, with other aspects of the statistical problems being viewed as more central, more interesting, or more important.

The behind the scenes role of statistical distances shows up in estimation, where we often use estimators based on minimizing a distance, explicitly or implicitly, but rarely studying how the properties of the distance determine the properties of the estimators. Distances are also prominent in goodness-of-fit (GOF) but the usual question we ask is how powerful is our method against a set of interesting alternatives not what aspects of the difference between the hypothetical model and the alternative are we measuring?

How can we interpret a numerical value of a distance? In goodness-of-fit we learn about Kolmogorov-Smirnov and Cram\'er-von Mises distances but how do these compare with each other? How can we improve their properties by looking at what statistical properties are they measuring?

Past interest in distance functions between statistical populations had a two-fold purpose. The first purpose was to prove existence theorems regarding some optimum solutions in the problem of statistical inference. Wald \cite{Wald1950} in his book on statistical decision functions gave numerous definitions of distance between two distributions which he primarily introduced for the purpose of creating decision functions. In this context, the choice of the distance function is not entirely arbitrary, but it is guided by the nature of the mathematical problem at hand.

Statistical distances are defined in a variety of ways, by comparing distribution functions, density functions or characteristic functions / moment generating functions. Further, there are discrete and continuous analogues of distances based on comparing density functions, where the word \textquotedblleft density\textquotedblright \hspace{0.02cm} is used to also indicate probability mass functions. Distances can also be constructed based on the divergence between a nonparametric probability density estimate and a parametric family of densities. Typical examples of distribution-based distances are the Kolmogorov-Smirnov and Cram\'er-von Mises distances. A separate class of distances is based upon comparing the empirical characteristic function with the theoretical characteristic function that corresponds, for example, to a family of models under study, or by comparing empirical and theoretical versions of moment generating functions.

In this paper we proceed to study in detail the properties of some statistical distances, and especially the properties of the class of chi-squared distances. We place emphasis on determining the sense in which we can offer meaningful interpretations of these distances as measures of statistical loss. Section~\ref{secction2} of the paper discusses the definition of a statistical distance in the discrete probability models context. Section~\ref{secction3} presents the class of chi-squared distances and their statistical interpretation again in the context of discrete probability  models. Subsection~\ref{subsection3.3} discusses metric and other properties of the symmetric chi-squared distance. One of the key issues in the construction of model misspecification measures is that allowance should be made for the scale difference between observed data and a hypothesized model continuous distribution. To account for this difference in scale we need the distance measure to exhibit discretization robustness, a concept that is discussed in subsection~\ref{subsection4.1}. To achieve discretization robustness we need sensitive distances, and this requirement dictates a balance of sensitivity and statistical noise. Various strategies that deal with this issue are discussed in the literature and we briefly discuss them in subsection~\ref{subsection4.1}. A flexible class of distances that allows the user to adjust the noise/sensitivity trade-off is the kernel smoothed distances upon which we briefly remark on in section~\ref{secction4}. Finally, section~\ref{secction5} presents further discussion.


\section{The Discrete Setting}
\label{secction2}

Procedures based on minimizing the distance between two density functions express the idea that a fitted statistical model should summarize reasonably well the data and that assessment of the adequacy of the fitted model can be achieved by using the value of the distance between the data and the fitted model.

The essential idea of density-based minimum distance methods has been presented in the literature for quite some time as it is evidenced by the method of minimum chi-squared \cite{Neyman1949}. An extensive list of minimum chi-squared methods can be found in Berkson \cite{Berkson1980}. Matusita \cite{Matusita1955} and Rao \cite{Rao1963} studied minimum Hellinger distance estimation in discrete models while Beran \cite{Beran1977} was the first to use the idea of minimum Hellinger distance in continuous models.

We begin within the discrete distribution framework so as to provide the clearest possible focus for our interpretations. Thus, let $\mathscr{T}=\{0, 1, 2, \cdots, T\}$, where $T$ is possibly infinite, be a discrete sample space. On this sample space we define a true probability density $\tau(t)$, as well as a family of densities $\mathscr{M}=\{m_{\theta}(t): \theta \in \Theta \}$, where $\Theta$ is the parameter space. Assume we have independent and identically distributed random variables $X_{1}, X_{2}, \cdots, X_{n}$ producing the realizations $x_{1}, x_{2}, \cdots, x_{n}$ from $\tau(\cdot)$. We record the data as $d(t)={n(t)}/{n}$, where $n(t)$ is the number of observations in the sample with value equal to $t$.

\begin{definition}
\label{definition1}
We will say that $\rho(\tau, m)$ is a statistical distance between two probability distributions with densities $\tau$, $m$ if $\rho(\tau, m) \geq 0$, with equality if and only if $\tau$ and $m$ are the same for all statistical purposes.
\end{definition}

Note that we do not require symmetry or the triangle inequality, so that $\rho(\tau, m)$ is not formally a metric. This is not a drawback as well known distances, such as Kullback-Leibler, are not symmetric and do not satisfy the triangle inequality.

We can extend the definition of a distance between two densities to that of a distance between a density and a class of densities as follows.

\begin{definition}
\label{definition2}
let $\mathscr{M}$ be a given model class and $\tau$ be a probability density that does not belong in the model class $\mathscr{M}$. Then, the distance between $\tau$ and $\mathscr{M}$ is defined as
\begin{equation*}
\rho(\tau, \mathscr{M})=\inf_{m \in \mathscr{M}}\rho(\tau, m),
\end{equation*}
whenever the infimum exists. Let $m_{best} \in \mathscr{M}$ be the best fitting model, then
\begin{equation*}
\rho(\tau, m_{best}) \triangleq \rho(\tau, \mathscr{M}).
\end{equation*}
\end{definition}

We interpret $\rho(\tau, m)$ or $\rho(\tau, \mathscr{M})$ as measuring the \textquotedblleft lack-of-fit\textquotedblright \hspace{0.02cm} in the sense that larger values of $\rho(\tau, m)$ mean that the model element $m$ is a worst fit to $\tau$ for our statistical purposes. Therefore, we will require $\rho(\tau, m)$ to indicate the worst mistake that we can make if we use $m$ instead of $\tau$. The precise meaning of this statement will be obvious in the case of the total variation distance, as we will see that the total variation distance measures the error, in probability, that is made when $m$ is used instead of $\tau$.

Lindsay \cite{Lindsay1994} studied the relationship between the concepts of efficiency and robustness for the class of $f$- or $\phi$-divergences in the case of discrete probability models and defined the concept of Pearson residuals as follows.

\begin{definition}
\label{definition3}
For a pair of densities $\tau$, $m$ define the Pearson residual by
\begin{equation}
\label{equation1}
\delta(t) = \frac{\tau(t)}{m(t)}-1,
\end{equation}
with range the interval $[-1, \infty)$.
\end{definition}

This residual has been used by Lindsay \cite{Lindsay1994}, Basu and Lindsay \cite{BasuLindsay1994}, Markatou \cite{Markatou2000, Markatou2001}, and Markatou et al. \cite{Markatou1997, Markatou1998} in investigating the robustness of the minimum disparity and weighted likelihood estimators respectively. It also appears in the definition of the class of power divergence measures defined by
\begin{align*}
\rho(\tau, m) & = \frac{1}{\lambda(\lambda+1)} \sum \tau(t) \left\{ \left(\frac{\tau(t)}{m(t)}\right)^{\lambda} - 1 \right\} \\
& = \frac{1}{\lambda(\lambda+1)} \sum m(t) \{ (1 + \delta(t))^{\lambda + 1} - 1 \}.
\end{align*}
For $\lambda = -2, -1, -{1}/{2}, 0$ and $1$ one obtains the well-known Neyman's chi-squared (divided by 2) distance, Kullback-Leibler divergence, twice-squared Hellinger distance, likelihood disparity and Pearson's chi-squared (divided by 2) distance respectively. For additional details see Lindsay \cite{Lindsay1994} and Basu and Lindsay \cite{BasuLindsay1994}.

A special class of distance measures we are particularly interested in is the class of chi-squared measures. In what follows we discuss in detail this class.


\section{Chi-Squared Distance Measures}
\label{secction3}

We present the class of chi-squared disparities and discuss their properties. We offer loss function interpretations of the chi-squared measures and show that Pearson's chi-squared is the supremum of squared $Z$-statistics while Neyman's chi-squared is the supremum of squared $t$-statistics. We also show that the symmetric chi-squared is a metric and offer a testing interpretation for it.

We start with the definition of a generalized chi-squared distance between two densities $\tau$, $m$.

\begin{definition}
\label{definition4}
Let $\tau(t)$, $m(t)$ be two discrete probability distributions. Then, define the class of generalized chi-squared distances as
\begin{equation*}
\chi_{a}^{2}(\tau, m) = \sum \frac{[\tau(t) - m(t)]^{2}}{a(t)},
\end{equation*}
where $a(t)$ is a probability mass function.
\end{definition}

Notice that if we restrict ourselves to the multinomial setting and  choose $\tau(t) = d(t)$ and $a(t) = m(t)$, the resulting chi-squared distance is Pearson's chi-squared statistic. Lindsay \cite{Lindsay1994} studied the robustness properties of a version of $\chi_{a}^{2}(\tau, m)$ by taking $a(t) = [\tau(t) + m(t)]/2$. The resulting distance is called \textit{symmetric chi-squared}, and it is given as
\begin{equation*}
S^{2}(\tau, m) = \sum \frac{2[\tau(t) - m(t)]^{2}}{\tau(t) + m(t)}.
\end{equation*}

The chi-squared distance is symmetric because $S^{2}(\tau, m) = S^{2}(m, \tau)$ and satisfies the triangle inequality. Thus, by definition it is a proper metric, and there is a strong dependence of the properties of the distance on the denominator $a(t)$. In general we can use as a denominator $a(t) = \alpha\tau(t) + \overline{\alpha}m(t)$, $\overline{\alpha} = 1- \alpha$, $\alpha \in [0, 1]$. The so defined distance is called blended chi-squared \cite{Lindsay1994}.


\subsection{Loss Function Interpretation}
\label{subsection3.1}

We now discuss the loss function interpretation of the aforementioned class of distances.

\begin{proposition}
\label{proposition1}
Let $\tau$, $m$ be two discrete probabilities. Then
\begin{equation*}
\rho(\tau, m) = \sup_{h} \frac{\{ \E_{\tau}(h(X)) - \E_{m}(h(X)) \}^{2}}{\Var_{a}(h(X))},
\end{equation*}
where $a(t)$ is a density function, and $h(X)$ has finite second moment.
\end{proposition}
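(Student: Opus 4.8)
The plan is to recognize the right-hand side as a Cauchy--Schwarz optimization and then to exhibit the maximizer explicitly. Write $f(t) = \tau(t) - m(t)$ and note that, since $\tau$ and $m$ are probability mass functions, $\sum_t f(t) = 0$. The numerator of the ratio is linear in $h$: $\E_\tau(h(X)) - \E_m(h(X)) = \sum_t h(t) f(t)$. The denominator, $\Var_a(h(X)) = \sum_t a(t) h(t)^2 - \big(\sum_t a(t) h(t)\big)^2$, is invariant under $h \mapsto h + c$ and is multiplied by $c^2$ under $h \mapsto c h$; the numerator shares both properties, the shift-invariance precisely because $\sum_t f(t) = 0$. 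Hence the ratio is unchanged if we restrict to $h$ normalized by $\E_a(h(X)) = \sum_t a(t) h(t) = 0$, for which $\Var_a(h(X)) = \sum_t a(t) h(t)^2$.

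First I would establish the upper bound. For any such normalized $h$, split $h(t) f(t) = \big(\sqrt{a(t)}\, h(t)\big)\big(f(t)/\sqrt{a(t)}\big)$ and apply the Cauchy--Schwarz inequality:
\[
\Big(\sum_t h(t) f(t)\Big)^2 \le \Big(\sum_t a(t) h(t)^2\Big)\Big(\sum_t \frac{f(t)^2}{a(t)}\Big) = \Var_a(h(X)) \cdot \chi_a^2(\tau, m).
\]
Dividing gives $\{\E_\tau(h(X)) - \E_m(h(X))\}^2 / \Var_a(h(X)) \le \chi_a^2(\tau, m)$ for every admissible $h$, hence $\sup_h(\cdot) \le \chi_a^2(\tau, m)$.

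Next I would show the bound is attained, which is what ties the identity to the claimed $\rho(\tau,m)$. The equality case of Cauchy--Schwarz dictates taking $h^\ast(t) = (\tau(t) - m(t))/a(t) = f(t)/a(t)$. This $h^\ast$ automatically satisfies the normalization, since $\sum_t a(t) h^\ast(t) = \sum_t f(t) = 0$; moreover $\sum_t h^\ast(t) f(t) = \sum_t f(t)^2/a(t) = \chi_a^2(\tau,m)$ and $\Var_a(h^\ast(X)) = \sum_t a(t) h^\ast(t)^2 = \sum_t f(t)^2/a(t) = \chi_a^2(\tau,m)$, so the ratio at $h^\ast$ equals $\chi_a^2(\tau,m)$, establishing $\rho(\tau,m) = \chi_a^2(\tau,m) = \sup_h(\cdot)$.

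The step I would treat as the main obstacle is the degenerate bookkeeping rather than the inequality itself: when $\chi_a^2(\tau,m) = \infty$ (for instance when the series diverges, so $h^\ast$ has infinite second moment under $a$), one cannot use $h^\ast$ directly and must instead produce a sequence $h_k$ of bounded, finite-support functions — e.g. truncations of $h^\ast$ to finite subsets of $\mathscr{T}$ — with $\{\E_\tau h_k - \E_m h_k\}^2 / \Var_a(h_k) \to \infty$, so that both sides are $+\infty$; for finite $T$ this does not arise. I would also record the harmless convention that if $\tau = m$ both sides are $0$ (every non-constant $h$ gives $0$, and constant $h$ gives the $0/0$ case, interpreted as $0$).
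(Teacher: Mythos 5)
Your proof is correct, and it takes a genuinely different route from the paper. The paper reduces the problem (using the same scale and location invariances you note) to maximizing the linear functional $\sum h(t)[\tau(t)-m(t)]$ subject to the quadratic constraint $\sum h^{2}(t)a(t)=1$, and then solves it by Lagrange multipliers, finding the critical point $\widehat{h}(t)\propto[\tau(t)-m(t)]/a(t)$ and substituting it back. You instead center $h$ so that $\E_{a}(h(X))=0$, obtain the upper bound $\{\E_{\tau}(h)-\E_{m}(h)\}^{2}\le \Var_{a}(h)\,\chi_{a}^{2}(\tau,m)$ directly from Cauchy--Schwarz, and verify attainment at $h^{\ast}(t)=[\tau(t)-m(t)]/a(t)$. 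What your route buys is rigor and completeness: Cauchy--Schwarz yields a true global bound (the Lagrangian argument by itself only identifies a stationary point, not that it is the supremum), your centering step makes explicit why the constraint $\sum h^{2}(t)a(t)=1$ may legitimately stand in for the variance constraint (it hinges on $\sum_{t}[\tau(t)-m(t)]=0$, which the paper uses only implicitly), and you flag the degenerate cases ($\chi_{a}^{2}=\infty$, handled by truncation, and $\tau=m$) that the paper passes over. What the paper's route buys is structural: by phrasing the result as a constrained optimization solved via a Lagrangian, it exhibits $\widehat{h}$ as the ``least favorable'' direction and sets up the Scheff\'e-type interpretation and the parallel treatment of the Kullback--Leibler and blended weighted Hellinger distances in Propositions~\ref{proposition2} and~\ref{proposition3}, where the same Lagrangian template is reused. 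The two arguments identify the same extremal function and the same value $\chi_{a}^{2}(\tau,m)$.
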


\begin{proof}
\label{proof_of_proposition1}
\smartqed
Let $h$ be a function defined on the sample space. We can prove the above statement by looking at the equivalent problem
\begin{equation*}
\sup_{h} \{ \E_{\tau}(h(X)) - \E_{m}(h(X)) \}^{2}, \quad\text{subject to} \ \Var_{a}(h(X)) =1.
\end{equation*}

Note that the transformation from the original problem to the simpler problem stated above is without loss of generality because the first problem is scale invariant, that is, the functions $\widehat{h}$ and $c\widehat{h}$ where $c$ is a constant give exactly the same values. In addition, we have location invariance in that $h(X)$ and $h(X) + c$ give again the same values, and symmetry requires us to solve
\begin{equation*}
\sup_{h} \{ \E_{\tau}(h(X)) - \E_{m}(h(X)) \},\quad \text{subject to} \ \sum h^{2}(t) a(t) = 1.
\end{equation*}

To solve this linear problem with its quadratic constraint we use Lagrange multipliers. The Lagrangian is given as
\begin{equation*}
L(t) = \sum h(t) (\tau(t) - m(t)) - \lambda \left\{ \sum h^{2}(t) a(t) - 1 \right\}.
\end{equation*}

Then
\begin{equation*}
\frac{\partial}{\partial h} L(t) = 0, \text{\ for each value of $t$},
\end{equation*}
is equivalent to
\begin{equation*}
\tau(t) - m(t) - 2\lambda h(t) a(t) = 0, \forall t,
\end{equation*}
or
\begin{equation*}
\widehat{h}(t) = \frac{\tau(t) - m(t)}{2 \lambda a(t)}.
\end{equation*}

Using the constraint we obtain
\begin{align*}
\sum \frac{ [ \tau(t) - m(t) ]^{2} }{4 \lambda^{2} a(t)} =1   \Rightarrow \widehat{\lambda} = \frac{1}{2} \left\{ \sum \frac{ [ \tau(t) - m(t) ]^{2} }{a(t)} \right\}^{1/2}.
\end{align*}

Therefore,
\begin{equation*}
\widehat{h} (t) = \frac{\tau(t) - m(t)}{a(t)  \sqrt{ \sum \frac{ [ \tau(t) - m(t) ]^{2} }{a(t)} } }.
\end{equation*}

If we substitute the above value of $h$ in the original problem we obtain
\[
\begin{split}
\sup_{h} \{ \E_{\tau}(h(X)) - &\E_{m}(h(X)) \}^{2}
 = \sup_{h} \left\{  \sum h(t) [ \tau(t) - m(t) ]^{2} \right\} \\
&= \left\{ \sum \frac{[\tau(t) - m(t)]^{2}}{a(t)  \sqrt{ \sum \frac{ [ \tau(t) - m(t) ]^{2} }{a(t)} } } \right\}^{2}\\
&= \left\{ \frac{1}{\sqrt{ \sum \frac{ [ \tau(t) - m(t) ]^{2} }{a(t)} }} (  \sum \frac{[\tau(t) - m(t)]^{2}}{a(t)} ) \right\}^{2}
 = \sum \frac{ [ \tau(t) - m(t) ]^{2} }{a(t)},
\end{split}
\]
as was claimed.
\qed
\end{proof}

\begin{remark}
\label{remark1}
Note that $\widehat{h}(t)$ is the least favorable function for detecting differences between means of two distributions.
\end{remark}

\begin{corollary}
\label{corollary1}
The standardized function which creates the largest difference in means is
\begin{equation*}
\widehat{h}(t) =  \frac{\tau(t) - m(t)}{a(t) \sqrt{\chi_{a}^{2}}},
\end{equation*}
where $\chi_{a}^{2} = \sum \frac{[\tau(t) - m(t)]^{2}}{a(t)}$, and the corresponding difference in means is
\begin{equation*}
\E_{\tau}[\widehat{h}(t)] - \E_{m}[\widehat{h}(t)] = \sqrt{\chi_{a}^{2}}.
\end{equation*}
\end{corollary}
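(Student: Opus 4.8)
The plan is to read the result off directly from the proof of Proposition~\ref{proposition1}, since $\widehat{h}$ is precisely the optimizer already constructed there. In solving $\sup_h \{\E_{\tau}(h(X)) - \E_{m}(h(X))\}$ subject to $\sum h^{2}(t)a(t) = 1$, the Lagrange-multiplier computation produced the unique (up to sign) candidate
\[
\widehat{h}(t) = \frac{\tau(t) - m(t)}{a(t)\sqrt{\sum [\tau(s)-m(s)]^{2}/a(s)}} = \frac{\tau(t)-m(t)}{a(t)\sqrt{\chi_{a}^{2}}},
\]
so nothing new needs to be derived; it remains only to verify that this $\widehat{h}$ is genuinely standardized in the variance sense and then to evaluate the resulting difference in means.

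First I would check that $\widehat{h}$ has mean zero under $a$: because $\tau$ and $m$ are probability mass functions, $\sum \widehat{h}(t)a(t) = \big(\sum \tau(t) - \sum m(t)\big)/\sqrt{\chi_{a}^{2}} = 0$. Hence $\Var_{a}(\widehat{h}(X)) = \E_{a}[\widehat{h}(X)^{2}] = \frac{1}{\chi_{a}^{2}}\sum [\tau(t)-m(t)]^{2}/a(t) = 1$, which is exactly the normalization used in Proposition~\ref{proposition1}. Then I would compute the induced gap directly: $\E_{\tau}[\widehat{h}(X)] - \E_{m}[\widehat{h}(X)] = \sum \widehat{h}(t)[\tau(t)-m(t)] = \frac{1}{\sqrt{\chi_{a}^{2}}}\sum [\tau(t)-m(t)]^{2}/a(t) = \sqrt{\chi_{a}^{2}}$. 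Combined with Proposition~\ref{proposition1}, whose supremum over standardized $h$ equals $\chi_{a}^{2}$, this simultaneously confirms the stated value and that $\widehat{h}$ attains the supremum.

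The argument has essentially no obstacle; the only point needing care is the interpretation of \emph{standardized}. One must observe that the mean-zero property of $\widehat{h}$ under $a$ holds automatically precisely because $\tau$ and $m$ each sum to one, so the constraint $\E_{a}[h^{2}]=1$ used in the proof of Proposition~\ref{proposition1} coincides with the variance constraint $\Var_{a}(h)=1$ appearing in its statement. Everything else is a direct substitution into formulas already established.
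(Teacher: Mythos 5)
Your proposal is correct and follows essentially the same route as the paper, which obtains the corollary by reading off the Lagrange-multiplier optimizer $\widehat{h}$ from the proof of Proposition~\ref{proposition1} and substituting it back to get the value $\sqrt{\chi_{a}^{2}}$ for the difference in means. Your explicit check that $\E_{a}[\widehat{h}(X)]=0$ (so that the constraint $\E_{a}[h^{2}]=1$ really is $\Var_{a}(h)=1$) is a detail the paper leaves implicit, and it is a worthwhile addition.
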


\begin{remark}
\label{remark2}
Are there any additional distances that can be obtained as solutions to an optimization problem? And what is the statistical interpretation of these optimization problems? To answer the aforementioned questions we first present the optimization problems associated with the Kullback-Leibler and Hellinger distances. In fact, the entire class of the blended weighted Hellinger distances can be obtained as a solution to an appropriately defined optimization problem. Secondly, we discuss the statistical interpretability of these problems by connecting them, by analogy, to the construction of confidence intervals via Scheff\'e's method.
\end{remark}

\begin{definition}
\label{definition5}
The Kullback-Leibler divergence or distance between two discrete probability density functions is defined as
\begin{equation*}
KL(\tau, m_{\beta}) = \sum_{x} m_{\beta} (x) [ \log m_{\beta} (x) - \log \tau(x) ].
\end{equation*}
\end{definition}

\begin{proposition}
\label{proposition2}
The Kullback-Leibler distance is obtained as a solution of the optimization problem
\begin{equation*}
\sup_{h} \sum_{x} h(x) m_{\beta} (x), \quad\text{subject to} \ \sum_{x} e^{h(x)} \tau(x) \leq 1,
\end{equation*}
where $h(\cdot)$ is a function defined on the same space as $\tau$.
\end{proposition}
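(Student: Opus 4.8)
The plan is to mirror the Lagrange-multiplier argument used in the proof of Proposition~\ref{proposition1}. First I would form the Lagrangian
\[
L(h) = \sum_x h(x)\, m_\beta(x) - \lambda\Bigl(\sum_x e^{h(x)}\tau(x) - 1\Bigr),
\]
treating the value $h(x)$ at each point $x$ as a free variable, and set $\partial L/\partial h(x) = 0$ for every $x$. This gives $m_\beta(x) - \lambda\, e^{h(x)}\tau(x) = 0$, hence the candidate maximizer $\widehat h(x) = \log\{m_\beta(x)/\tau(x)\} - \log\lambda$.

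Next I would pin down $\lambda$ using the constraint. Since the objective $\sum_x h(x) m_\beta(x)$ is nondecreasing in each coordinate $h(x)$ (because $m_\beta(x)\ge 0$), the inequality constraint must be active at the optimum, so $\sum_x e^{\widehat h(x)}\tau(x) = 1$. Substituting the form of $\widehat h$ gives $\lambda^{-1}\sum_x m_\beta(x) = \lambda^{-1} = 1$, so $\lambda = 1$ and $\widehat h(x) = \log\{m_\beta(x)/\tau(x)\}$. Evaluating the objective at $\widehat h$,
\[
\sum_x \widehat h(x)\, m_\beta(x) = \sum_x m_\beta(x)\bigl[\log m_\beta(x) - \log\tau(x)\bigr] = KL(\tau, m_\beta),
\]
which is the asserted value.

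To upgrade this stationarity computation to a genuine maximization statement I would add a one-line global bound. For any feasible $h$,
\[
\sum_x h(x)\, m_\beta(x) - KL(\tau, m_\beta) = \sum_x m_\beta(x)\log\frac{e^{h(x)}\tau(x)}{m_\beta(x)} \le \log\sum_x e^{h(x)}\tau(x) \le 0,
\]
where the first inequality is Jensen's inequality applied to the concave function $\log$ under the probability measure $m_\beta$, and the second is feasibility. Equality forces $e^{h(x)}\tau(x)/m_\beta(x)$ to be constant in $x$ and the constraint to bind, i.e. $h = \widehat h$, so the supremum is attained and equals $KL(\tau, m_\beta)$. Equivalently, one can note that the feasible set $\{h:\sum_x e^{h(x)}\tau(x)\le 1\}$ is convex (each $h(x)\mapsto e^{h(x)}$ is convex) and the objective is linear, so the problem is convex and the stationary point found above is automatically a global maximum.

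The main obstacle I anticipate is bookkeeping rather than ideas: when $T=\infty$ one must assume $\tau$ and $m_\beta$ share support (otherwise $KL(\tau,m_\beta)=+\infty$ and the supremum is genuinely unbounded, approached along functions that diverge where $\tau$ vanishes), and one should restrict attention to $h$ with $\sum_x|h(x)|m_\beta(x)<\infty$ so that the objective and the manipulations above are well defined and the interchange of summation is legitimate. With these caveats -- treated lightly in the discrete setting of the paper -- the proof is essentially the two displayed chains above.
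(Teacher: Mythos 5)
Your proposal is correct and follows essentially the same route as the paper, whose proof is just the one-line instruction to write the Lagrangian and differentiate with respect to $h$; your stationarity computation ($\widehat h(x)=\log\{m_\beta(x)/\tau(x)\}$, $\lambda=1$) is exactly what that sketch intends, and it recovers $KL(\tau,m_\beta)$ as defined in Definition~\ref{definition5}. The additional Jensen/convexity argument showing the stationary point is a global maximum, and the support/integrability caveats, simply supply rigor the paper leaves implicit.
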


\begin{proof}
\label{proof_of_proposition2}
\smartqed
It is straightforward if one writes the Lagrangian and differentiates with respect to $h$.
\qed
\end{proof}

\begin{definition}
\label{definition6}
The class of squared blended weighted Hellinger distances ($BWHD_{\alpha}$) is defined as
\begin{equation*}
(BWHD_{\alpha})^{2} = \sum_{x} \frac{ [ \tau(x) - m_{\beta}(x) ]^{2} }{ 2\left[ \alpha\sqrt{\tau(x)} + \overline{\alpha}\sqrt{m_{\beta}(x)} \right]^{2} },
\end{equation*}
where $0 < \alpha < 1$, $\overline{\alpha} = 1- \alpha$ and $\tau(x)$, $m_{\beta}(x)$ are two probability densities.
\end{definition}

\begin{proposition}
\label{proposition3}
The class of $BWHD_{\alpha}$ arises as a solution to the optimization problem
\begin{equation*}
\sup_{h} \sum_{x} h(x) [ \tau(x) - m_{\beta}(x) ], \quad\text{subject to} \ \sum_{x} h^{2}(x) \left[ \alpha\sqrt{\tau(x)} + \overline{\alpha}\sqrt{m_{\beta}(x)} \right]^{2} \leq 1.
\end{equation*}
When $\alpha = \overline{\alpha} = {1}/{2}$, the $(BWHD_{1/2})^{2}$ gives twice the squared Hellinger distance.
\end{proposition}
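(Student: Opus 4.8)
The plan is to follow, essentially line for line, the argument of the proof of Proposition~\ref{proposition1}, now taking the weight to be $a(x) = \left[\alpha\sqrt{\tau(x)} + \overline{\alpha}\sqrt{m_{\beta}(x)}\right]^{2}$ in place of a general probability mass function. The one structural difference from Proposition~\ref{proposition1} is that this $a$ is not itself a probability mass function --- it sums to $\alpha^{2} + \overline{\alpha}^{2} + 2\alpha\overline{\alpha}\sum_{x}\sqrt{\tau(x)m_{\beta}(x)} \le 1$ --- but the optimization argument never uses total mass one, only positivity of the weights. Since $0 < \alpha < 1$, one has $a(x) = 0$ only at points where $\tau(x) = m_{\beta}(x) = 0$, at which the corresponding summand is set to zero by convention, so every ratio appearing below is well defined.

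First I would pass from the inequality constraint to an equality constraint: the objective $h \mapsto \sum_{x} h(x)[\tau(x) - m_{\beta}(x)]$ is linear, so (excluding the trivial case $\tau \equiv m_{\beta}$) its supremum over the ball $\{h : \sum_{x} h^{2}(x) a(x) \le 1\}$ is attained on the sphere $\sum_{x} h^{2}(x) a(x) = 1$, by rescaling. Then, exactly as in Proposition~\ref{proposition1}, I would form the Lagrangian $L = \sum_{x} h(x)[\tau(x) - m_{\beta}(x)] - \lambda\{\sum_{x} h^{2}(x) a(x) - 1\}$, solve $\partial L/\partial h(x) = 0$ to get $\widehat{h}(x) = [\tau(x) - m_{\beta}(x)]/(2\lambda a(x))$, determine $\widehat{\lambda} = \tfrac{1}{2}\{\sum_{x} [\tau(x) - m_{\beta}(x)]^{2}/a(x)\}^{1/2}$ from the constraint, and substitute back to obtain
\[
\sup_{h} \sum_{x} h(x)[\tau(x) - m_{\beta}(x)] = \left\{ \sum_{x} \frac{[\tau(x) - m_{\beta}(x)]^{2}}{\left[\alpha\sqrt{\tau(x)} + \overline{\alpha}\sqrt{m_{\beta}(x)}\right]^{2}} \right\}^{1/2} = \sqrt{2}\,BWHD_{\alpha}
\]
(with both sides possibly infinite when $\mathscr{T}$ is infinite). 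The same identity also drops out in one line from the Cauchy--Schwarz inequality applied to the pairing of $h(x)\sqrt{a(x)}$ with $[\tau(x) - m_{\beta}(x)]/\sqrt{a(x)}$, the extremizer being $\widehat{h} \propto [\tau - m_{\beta}]/a$; I would likely present it that way for brevity. Hence, as $\alpha$ ranges over $(0,1)$, this family of optimization problems reproduces precisely the family $BWHD_{\alpha}$, up to the fixed factor $\sqrt{2}$ created by the $2$ in the denominator of Definition~\ref{definition6}.

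For the closing assertion I would simply set $\alpha = \overline{\alpha} = 1/2$ and use the factorization $\tau(x) - m_{\beta}(x) = \left(\sqrt{\tau(x)} - \sqrt{m_{\beta}(x)}\right)\left(\sqrt{\tau(x)} + \sqrt{m_{\beta}(x)}\right)$, which collapses the generic summand to $4\left(\sqrt{\tau(x)} - \sqrt{m_{\beta}(x)}\right)^{2}$ and yields $(BWHD_{1/2})^{2} = 2\sum_{x}\left(\sqrt{\tau(x)} - \sqrt{m_{\beta}(x)}\right)^{2}$, i.e.\ twice the squared Hellinger distance. I do not anticipate a genuine obstacle here; the only points demanding care are bookkeeping the constant $\sqrt{2}$ (so the exact claim is ``up to a constant''), confirming that the non-probability weight $a(x)$ still legitimizes the Lagrange / Cauchy--Schwarz step, and noting that equality in Cauchy--Schwarz is actually attained by a normalizable $h$ --- all routine.
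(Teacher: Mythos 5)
Your proposal is correct and is exactly the argument the paper intends: its own ``proof'' of Proposition~\ref{proposition3} is literally the single word ``Straightforward,'' deferring to the Lagrangian computation of Propositions~\ref{proposition1} and~\ref{proposition2}, which you carry out in full (equivalently via Cauchy--Schwarz). Your added care about the weight $\left[\alpha\sqrt{\tau}+\overline{\alpha}\sqrt{m_{\beta}}\right]^{2}$ not being a probability mass function, and about the optimal value being $\sqrt{2}\,BWHD_{\alpha}$ rather than $(BWHD_{\alpha})^{2}$ (so the identification is up to a fixed constant), only makes explicit what the paper glosses over.
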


\begin{proof}
\label{proof_of_proposition3}
\smartqed
Straightforward.
\qed
\end{proof}

Although both Kullback-Leibler and blended weighted Hellinger distances are solutions of appropriate optimization problems, they do not arise from optimization problems in which the constraints can be interpreted as variances. To exemplify and illustrate further this point we first need to discuss the connection with Scheff\'e's confidence intervals.

\bigbreak
One of the methods of constructing confidence intervals is Scheff\'e's method. The method adjusts the significance levels of the confidence intervals for general contrasts to account for multiple comparisons. The procedure, therefore, controls the overall significance for any possible contrast or set of contrasts and can be stated as follows,
\begin{equation*}
\sup_{\pmb{c}} \left| \pmb{c}^{T} (\pmb{y} - \pmb{\mu}) \right| < K \widehat{\sigma}, 
\quad\text{subject to \ } \parallel \pmb{c} \parallel =1, \ \pmb{c}^{T}\pmb{1} = 0,
\end{equation*}
where $\widehat{\sigma}$ is an estimated contrast variance, $K$ is an appropriately defined constant.

The chi-squared distances extend this framework as follows. Assume that $\mathscr{H}$ is a class of functions which are taken, without loss of generality, to have zero expectation. Then, we construct the optimization problem $\sup_{h} \int h(x) [ \tau(x) - m_{\beta}(x) ] dx$, subject to a constraint that can possibly be interpreted as a constraint on the variance of $h(x)$ either under the hypothesized model distribution or under the distribution of the data.

The chi-squared distances arise as solutions of optimization problems subject to variance constrains. As such, they are interpretable as tools that allow the construction of \textquotedblleft Scheff\'e-type\textquotedblright confidence intervals for models. On the other hand, distances such as the Kullback-Leibler or  blended weighted Hellinger distance do not arise as solutions of optimization problems subject to interpretable variance constraints. As such they cannot be used to construct confidence intervals for models.


\subsection{Loss Analysis of Pearson and Neyman Chi-Squared Distances}
\label{subsection3.2}

We next offer interpretations of the Pearson chi-squared and Neyman chi-squared statistics. These interpretations are not well known; furthermore, they are useful in illustrating the robustness character of the Neyman statistic and the non-robustness character of the Pearson statistic.

Recall that the Pearson statistic is
\begin{align*}
\sum \frac{ [d(t) - m(t)]^{2} }{m(t)} & = \sup_{h} \frac{ [ \E_{d}(h(X)) - \E_{m}(h(X)) ]^{2} }{ \Var_{m}(h(X)) } \\
& = \frac{1}{n} \sup_{h} \frac{ [ \frac{1}{n} \sum h(X_{i}) - \E_{m}(h(X)) ]^{2} }{ \frac{1}{n} \Var_{m}(h(X)) }
 = \frac{1}{n} \sup_{h} Z_{h}^{2},
\end{align*}
that is, the Pearson statistic is the supremum of squared $Z$-statistics.

A similar argument shows that Neyman's chi-squared equals $ \sup_{h} t_{h}^{2}$, the supremum of squared $t$-statistics.

This property shows that the chi-squared measures have a statistical interpretation in that a small chi-squared distance indicates that the means are close on the scale of standard deviation. Furthermore, an additional advantage of the above interpretations is that the robustness character of these statistics is exemplified. Neyman's chi-squared, being the supremum of squared $t$-statistics, is robust, whereas Pearson's chi-squared is non-robust, since it is the supremum of squared $Z$-statistics.

\subruninhead{Signal-to-noise:} There is an additional interpretation of the chi-squared statistic that rests on the definition of signal-to-noise ratio that comes from the engineering literature.

Consider the pair of hypotheses $H_{0}: X_{i} \sim \tau$ versus the alternative $H_{1}:  X_{i} \sim m$, where $X_{i}$ are independent and identically distributed random variables. If we consider the set of randomized test functions that depend on the \textquotedblleft output\textquotedblright \hspace{0.02cm} function $h$, the distance between $H_{0}$ and $H_{1}$ is
\begin{equation*}
S^{2}(\tau, m) = \frac{[\E_{m}(h(X))-\E_{\tau}(h(X))]^{2}}{\Var_{\tau}(h(X))}.
\end{equation*}

This quantity is a generalization of one of the more common definitions of signal-to-noise ratio. If, instead of working with a given output function $h$, we take supremum over the output functions $h$, we obtain Neyman's chi-squared distance, which has been used in the engineering literature for robust detection. Further, the quantity $S^{2}(\tau, m)$ has been used in the design of decision systems \cite{Poor1980}.


\subsection{Metric Properties of the Symmetric Chi-Squared Distance}
\label{subsection3.3}

The symmetric chi-squared distance, defined as
\begin{equation*}
S^{2}(\tau, m) = \sum \frac{ 2[\tau(t) - m(t)]^{2} }{m(t) + \tau(t) },
\end{equation*}
can be viewed as a good compromise between the non-robust Pearson distance and the robust Neyman distance. In what follows, we prove that $S^{2}(\tau, m)$ is indeed a metric. The following series of lemmas will help us establish the triangle inequality for $S^{2}(\tau, m)$.

\begin{lemma}
\label{lemma1}
If $a$, $b$, $c$ are numbers such that $0 \leq a \leq b \leq c$ then
\begin{equation*}
\frac{c-a}{\sqrt{c+a}} \leq \frac{b-a}{\sqrt{b+a}} + \frac{c-b}{\sqrt{c+b}}.
\end{equation*}
\end{lemma}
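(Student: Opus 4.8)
The plan is to reduce the three-variable inequality to a one-variable inequality by exploiting homogeneity, and then to establish the latter by elementary calculus. First I would observe that the claimed inequality is homogeneous of degree $1/2$ in $(a,b,c)$: replacing $(a,b,c)$ by $(\lambda a, \lambda b, \lambda c)$ for $\lambda > 0$ multiplies both sides by $\sqrt{\lambda}$. Hence, after separating the trivial case $c = 0$ (where $a = b = c = 0$ and both sides vanish), I may normalize $c = 1$, so that $0 \le a \le b \le 1$.

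Next I would further reduce the number of free parameters. Define, for fixed $a \in [0,1]$, the function
\begin{equation*}
g_{a}(b) = \frac{b-a}{\sqrt{b+a}} + \frac{1-b}{\sqrt{1+b}}, \qquad b \in [a, 1],
\end{equation*}
so that the inequality to be proved is $g_{a}(b) \ge g_{a}(1) = \frac{1-a}{\sqrt{1+a}}$ for all $b \in [a,1]$ (note $g_a(1)$ reproduces the left-hand side since $c=1$, and $g_a(b)$ at $b=1$ gives the first term alone). It therefore suffices to show that $g_{a}$ attains its minimum over $[a,1]$ at the right endpoint $b = 1$. I would do this by differentiating: a direct computation gives
\begin{equation*}
g_{a}'(b) = \frac{(b+a) - \tfrac12(b-a)}{(b+a)^{3/2}} - \frac{(1+b) + \tfrac12(1-b)}{(1+b)^{3/2}} = \frac{b + 3a}{2(b+a)^{3/2}} - \frac{b + 3}{2(1+b)^{3/2}}.
\end{equation*}
The goal is to show $g_{a}'(b) \le 0$ on $[a,1]$, i.e. $(b+3a)(1+b)^{3/2} \le (b+3)(b+a)^{3/2}$. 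Since $a \le 1$ forces $b + 3a \le b + 3$ and $b + a \le b + 1$, the two factors compare in opposite directions, so this needs a genuine argument rather than term-by-term comparison.

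The main obstacle, then, is verifying $(b+3a)^{2}(1+b)^{3} \le (b+3)^{2}(b+a)^{3}$ (squaring is legitimate as all quantities are nonnegative) for $0 \le a \le b \le 1$. I would treat this as a polynomial inequality in $a$ with $b$ a parameter: both sides are polynomials, they agree at $a = 1$ (both equal $(b+3)^2(b+1)^3$), and one checks the inequality holds at $a = 0$ (namely $b^{2}(1+b)^{3} \le (b+3)^{2} b^{3}$, i.e. $(1+b)^3 \le b(b+3)^2$, which fails at small $b$ — so in fact I should be more careful here and instead work directly from $g_a'$). A cleaner route: rewrite $g_{a}'(b) \le 0$ as $\frac{b+3a}{b+a} \cdot \bigl(\frac{b+a}{1+b}\bigr)^{3/2} \le \frac{b+3}{1+b}$ and set $u = \frac{b+a}{1+b} \in (0,1]$; since $\frac{b+3a}{b+a} = 1 + \frac{2a}{b+a}$ and $\frac{b+3}{1+b} = 1 + \frac{2}{1+b}$, and $\frac{a}{b+a} \le \frac{1}{1+b}$ (cross-multiplying: $a(1+b) \le b+a \iff ab \le b$, true), it suffices to show $\bigl(1 + \tfrac{2}{1+b}\bigr) u^{3/2} \le 1 + \tfrac{2}{1+b}$ fails in general — so the honest approach is to keep both perturbations and show the product of (something $\le 1+\frac{2}{1+b}$) and ($u^{3/2}\le 1$) stays below $1 + \frac{2}{1+b}$, which follows since $u^{3/2}\le 1$ and $\frac{b+3a}{b+a} \le \frac{b+3}{1+b}$ — the last inequality being exactly $a(1+b) \le b+a$ after clearing denominators, i.e. $ab \le b$. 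Once $g_a'(b)\le 0$ is secured, monotonicity gives $g_a(b)\ge g_a(1)$, which is the claim. I expect the delicate bookkeeping to be precisely this comparison of the two correction terms $\frac{b+3a}{b+a}$ versus $\frac{b+3}{1+b}$; everything else is routine.
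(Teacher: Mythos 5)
Your reduction (homogeneity, normalize $c=1$, study $g_a(b)=\tfrac{b-a}{\sqrt{b+a}}+\tfrac{1-b}{\sqrt{1+b}}$ on $[a,1]$) and your derivative formula $g_a'(b)=\tfrac{b+3a}{2(b+a)^{3/2}}-\tfrac{b+3}{2(1+b)^{3/2}}$ are both correct, but the key step — that $g_a'(b)\le 0$ on all of $[a,1]$ — is false, so the monotonicity route collapses. Concretely, $g_a'(a)=\tfrac{1}{\sqrt{2a}}-\tfrac{a+3}{2(1+a)^{3/2}}\to+\infty$ as $a\to 0^{+}$; e.g.\ for $a=0$, $b=0.1$ one gets $g_0'(0.1)\approx 1.58-1.34>0$. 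Indeed this had to fail: $g_a(a)=g_a(1)=\tfrac{1-a}{\sqrt{1+a}}$, so $g_a$ takes the same value at both endpoints and rises in between; it cannot be nonincreasing. You actually noticed the symptom (the polynomial inequality fails at $a=0$ for small $b$), but the "cleaner route" you then used to salvage the claim rests on a mis-transcription: $g_a'(b)\le 0$ is equivalent to $\tfrac{b+3a}{b+a}\sqrt{\tfrac{1+b}{b+a}}\le\tfrac{b+3}{1+b}$ (the factor $\sqrt{(1+b)/(b+a)}\ge 1$ multiplies the \emph{left} side), not to $\tfrac{b+3a}{b+a}\bigl(\tfrac{b+a}{1+b}\bigr)^{3/2}\le\tfrac{b+3}{1+b}$; your version attaches a factor $\le 1$ to the smaller side and so proves a trivially weaker statement, not $g_a'\le 0$. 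As written, the proof establishes nothing beyond the (true but insufficient) comparison $\tfrac{b+3a}{b+a}\le\tfrac{b+3}{1+b}$.

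To rescue your one-variable approach you would have to argue instead that the minimum of $g_a$ over $[a,1]$ is attained at an endpoint (e.g.\ by showing $g_a$ has at most one interior critical point, necessarily a maximum), which takes real work since $g_a$ is the sum of a concave and a convex term. The paper avoids all of this with a two-line convexity argument: writing the right-hand side as $(c-a)\{\alpha g(a+b)+(1-\alpha)g(c+b)\}$ with $g(t)=1/\sqrt{t}$ convex and $\alpha=(b-a)/(c-a)$, Jensen's inequality gives $\alpha g(a+b)+(1-\alpha)g(c+b)\ge g(\alpha(a+b)+(1-\alpha)(c+b))=g(c+a)$, which is exactly the claim. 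I would recommend either adopting that argument or completing the endpoint-minimum analysis; the current derivative argument has a genuine gap.
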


\begin{proof}
\label{proof_lemma1}
\smartqed
First we work with the right-hand side of the above inequality. Write
\begin{align*}
\frac{b-a}{\sqrt{b+a}} + \frac{c-b}{\sqrt{c+b}} &= (c-a) \{ \frac{b-a}{c-a} \frac{1}{\sqrt{b+a}} +  \frac{c-b}{c-a} \frac{1}{\sqrt{c+b}} \} \\
& = (c-a) \{ \frac{\alpha}{\sqrt{a+b}} +  (1-\alpha) \frac{1}{\sqrt{c+b}} \},
\end{align*}
where $\alpha = (b-a)/(c-a)$. Set $g(t) = 1 / \sqrt{t}$, $t>0$. Then $g^{\prime \prime}(t) = \frac{d^{2}}{d t^{2}} g(t) > 0$, hence the function $g(t)$ is convex. Therefore, the aforementioned relationship becomes
\begin{equation*}
(c-a)\{ \alpha g(a+b) + (1-\alpha) g(c+b) \}.
\end{equation*}

But
\begin{equation*}
\alpha g(a+b) + (1-\alpha) g(c+b) \geq g(\alpha(a+b) + (1-\alpha)(c+b)),
\end{equation*}
where
\begin{equation*}
\alpha(a+b) + (1-\alpha)(c+b) = \frac{b-a}{c-a} (a+b) + \frac{c-b}{c-a} (b+c) = c+a.
\end{equation*}

Thus
\begin{equation*}
\alpha g(a+b) + (1-\alpha) g(c+b) \geq g(c+a),
\end{equation*}
and hence
\begin{equation*}
\frac{b-a}{\sqrt{b+a}} + \frac{c-b}{\sqrt{c+b}} \geq \frac{c-a}{\sqrt{c+a}},
\end{equation*}
as was stated.
\qed
\end{proof}

Note that because the function is strictly convex we do not obtain equality except when $a=b=c$.

\begin{lemma}
\label{lemma2}
If $a$, $b$, $c$ are numbers such that $a \geq 0$, $b \geq 0$, $c \geq 0$ then
\begin{equation*}
\left| \frac{c-a}{\sqrt{c+a}} \right|  \leq \left| \frac{b-a}{\sqrt{b+a}} \right| + \left| \frac{c-b}{\sqrt{c+b}} \right|.
\end{equation*}
\end{lemma}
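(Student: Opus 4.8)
The plan is to reduce Lemma~\ref{lemma2} to Lemma~\ref{lemma1} by exploiting symmetry together with a monotonicity property of the function $F(x,y)=|x-y|/\sqrt{x+y}$. First I would record that $F$ is symmetric in its two arguments, so that after possibly interchanging $a$ and $c$ we may assume $0\le a\le c$; the claimed inequality then reads $F(a,c)\le F(a,b)+F(b,c)$. Along the way I would dispose of the degenerate cases: if $x+y=0$ with $x,y\ge 0$ then $x=y=0$, so the offending term is of the form $0/0$, which we read as $0$; since the numerator then also vanishes this is harmless, and in the remaining analysis every denominator that appears may be taken strictly positive.

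Next I would establish the key monotonicity fact: for fixed $c>0$ the map $x\mapsto F(x,c)=(c-x)/\sqrt{c+x}$ is non-increasing on $[0,c]$, and symmetrically, for fixed $a\ge 0$ the map $y\mapsto F(a,y)=(y-a)/\sqrt{y+a}$ is non-decreasing on $[a,\infty)$. Both are elementary: setting $u=c+x$ gives $F(x,c)^2=(2c-u)^2/u$, whose derivative in $u$ equals $-(2c-u)(u+2c)/u^2\le 0$ on $u\in[c,2c]$, so $F(x,c)^2$, hence $F(x,c)$, decreases as $x$ increases; the second statement follows by the same substitution (or by a direct one-line derivative computation). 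Intuitively: holding one argument fixed, $F$ grows as the two arguments move apart.

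With these in hand I would split into three cases according to the position of the ``middle'' point $b$ relative to $a$ and $c$. If $a\le b\le c$, the inequality is precisely Lemma~\ref{lemma1}. If $b<a$, then $b$ and $a$ both lie in $[0,c]$ with $b<a$, so monotonicity gives $F(a,c)\le F(b,c)\le F(a,b)+F(b,c)$, the last step because $F(a,b)\ge 0$. If $b>c$, then $b$ and $c$ both lie in $[a,\infty)$ with $c<b$, so monotonicity gives $F(a,c)\le F(a,b)\le F(a,b)+F(b,c)$. These three cases are exhaustive, which completes the proof.

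I do not expect a genuine obstacle here: the symmetry reduction and the monotonicity estimate are routine, and the only substantive ingredient is Lemma~\ref{lemma1}, which covers exactly the case where $b$ lies between $a$ and $c$ and which we are given. The points requiring a little care are the bookkeeping of the absolute values in the two ``outer'' cases — making sure the correct one of $F(a,b)$, $F(b,c)$ dominates $F(a,c)$ — and the handling of vanishing denominators noted above.
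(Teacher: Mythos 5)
Your proposal is correct and follows essentially the same route as the paper: invoke Lemma~\ref{lemma1} when $b$ lies between $a$ and $c$ (after using the symmetry in $a$ and $c$), and handle the outer cases by the monotonicity of $t\mapsto (t-a)/\sqrt{t+a}$ (equivalently, of $x\mapsto (c-x)/\sqrt{c+x}$), which is exactly the paper's argument with the function $f_{1}$. Your extra care with vanishing denominators and the explicit treatment of both outer cases merely fills in steps the paper dismisses with ``similarly.''
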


\begin{proof}
\label{proof_lemma2}
\smartqed
We will distinguish three different cases.

Case 1: $0 \leq a \leq b \leq c$ is already discussed in Lemma~\ref{lemma1}.

Case 2: $0 \leq c \leq b \leq a$ can be proved as in Lemma~\ref{lemma1} by interchanging the role of $a$ and $c$.

Case 3: In this case $b$ is not between $a$ and $c$, thus either $a \leq c \leq b $ or $b \leq a \leq c$.

Assume first that $a \leq c \leq b $. Then we need to show that
\begin{equation*}
\frac{c-a}{\sqrt{c+a}} \leq \frac{b-a}{\sqrt{b+a}}.
\end{equation*}

We will prove this by showing that the above expressions are the values of an increasing function at two different points. Thus, consider
\begin{equation*}
f_{1}(t) = \frac{t-a}{\sqrt{t+a}}.
\end{equation*}

It follows that
\begin{equation*}
f_{1}(b) = \frac{b-a}{\sqrt{b+a}} \text{ \ and \ }  f_{1}(c) = \frac{c-a}{\sqrt{c+a}}.
\end{equation*}

The function $f_{1}(t)$ is increasing because $f_{1}^{\prime} > 0$ (recall $a \geq 0$) and since $c \leq b$ this implies $f_{1}(c) \leq f_{1}(b)$. Similarly we prove the inequality for  $b \leq a \leq c$.
\qed
\end{proof}

\begin{lemma}
\label{lemma3}
The triangle inequality holds for the symmetric chi-squared distance $S^{2}(\tau, m)$, that is,
\begin{equation*}
\{ S^{2}(\tau, m) \}^{1/2} \leq \{ S^{2}(\tau, g) \}^{1/2} + \{ S^{2}(g, m) \}^{1/2}.
\end{equation*}
\end{lemma}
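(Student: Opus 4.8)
The plan is to observe that $\{S^{2}(\tau,m)\}^{1/2}$ is literally an $\ell^{2}$ norm, and then to obtain the triangle inequality by combining the coordinatewise bound of Lemma~\ref{lemma2} with Minkowski's inequality. For a pair of densities $\tau$, $m$ let $u_{\tau,m}$ denote the vector (indexed by $t\in\mathscr{T}$) with coordinates
\begin{equation*}
u_{\tau,m}(t)=\frac{\sqrt{2}\,[\tau(t)-m(t)]}{\sqrt{\tau(t)+m(t)}},
\end{equation*}
with the convention $u_{\tau,m}(t)=0$ whenever $\tau(t)=m(t)=0$. Then $\{S^{2}(\tau,m)\}^{1/2}=\|u_{\tau,m}\|_{2}$, and likewise for the pairs $(\tau,g)$ and $(g,m)$, so the assertion of the lemma is exactly $\|u_{\tau,m}\|_{2}\le\|u_{\tau,g}\|_{2}+\|u_{g,m}\|_{2}$.

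First I would apply Lemma~\ref{lemma2} for each fixed $t$ with the identification $a=\tau(t)$, $b=g(t)$, $c=m(t)$; since that lemma assumes only nonnegativity and imposes no ordering on $a$, $b$, $c$, it gives, after multiplying through by $\sqrt{2}$,
\begin{equation*}
|u_{\tau,m}(t)|\le|u_{\tau,g}(t)|+|u_{g,m}(t)|\qquad\text{for every }t\in\mathscr{T}.
\end{equation*}
Next, squaring, summing over $t$, and using that $x\mapsto\sqrt{x}$ preserves the order, I get $\|u_{\tau,m}\|_{2}\le\big\||u_{\tau,g}|+|u_{g,m}|\big\|_{2}$, where the right-hand side is the $\ell^{2}$ norm of the vector with coordinates $|u_{\tau,g}(t)|+|u_{g,m}(t)|$. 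Finally Minkowski's inequality for $\ell^{2}$ yields $\big\||u_{\tau,g}|+|u_{g,m}|\big\|_{2}\le\|u_{\tau,g}\|_{2}+\|u_{g,m}\|_{2}$, and stringing the two bounds together completes the proof.

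The substantive part of the argument has already been carried out in Lemmas~\ref{lemma1} and~\ref{lemma2} (the convexity of $t\mapsto t^{-1/2}$ and the case analysis), so the only thing left to check is routine bookkeeping: each of the three quantities $S^{2}(\tau,m)$, $S^{2}(\tau,g)$, $S^{2}(g,m)$ is finite because $[\tau(t)-m(t)]^{2}\le[\tau(t)+m(t)]^{2}$ forces every summand to be at most $2[\tau(t)+m(t)]$, so that $S^{2}\le 4$; and the coordinates where a denominator vanishes (that is, where both densities are zero) contribute nothing to any of the three sums, so the convention introduced above is harmless. Thus the only real ``obstacle'' is the initial recognition that the square-rooted symmetric chi-squared is an honest $\ell^{2}$ norm — once that is in hand, Lemma~\ref{lemma2} plus Minkowski finish it.
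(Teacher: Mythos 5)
Your proposal is correct and follows essentially the same route as the paper: apply Lemma~\ref{lemma2} coordinatewise with $a=\tau(t)$, $b=g(t)$, $c=m(t)$ and then use the $\ell^{2}$ triangle inequality. The only cosmetic difference is that you cite Minkowski's inequality directly, while the paper derives it on the spot by expanding $\sum(\alpha_t+\beta_t)^2$ and bounding the cross term with Cauchy--Schwarz; your explicit handling of vanishing denominators and finiteness is harmless extra care.
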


\begin{proof}
\label{proof_lemma3}
\smartqed
Set
\begin{eqnarray*}
\alpha_{t} = \frac{ \left| \tau(t) - g(t) \right| }{ \sqrt{ \tau(t) + g(t) } }, \text{ \ }
\beta_{t} = \frac{ \left| g(t) - m(t) \right| }{ \sqrt{ g(t) + m(t) } }.
\end{eqnarray*}

By Lemma~\ref{lemma2}
\begin{equation*}
\left\{ \sum \alpha_{t}^{2} \right\}^{1/2} \leq \left\{ \sum (\alpha_{t} + \beta_{t})^{2} \right\}^{1/2}.
\end{equation*}

But
\begin{align*}
\sum (\alpha_{t} + \beta_{t})^{2} &= \sum \alpha_{t}^{2} + \sum \beta_{t}^{2} + 2\sum \alpha_{t} \beta_{t} \\
& \leq \sum \alpha_{t}^{2} + \sum \beta_{t}^{2} + 2 \left\{ \sum \alpha_{t}^{2} \right\}^{1/2} \left\{ \sum \beta_{t}^{2} \right\}^{1/2}.
\end{align*}

Therefore
\begin{equation*}
\sum (\alpha_{t} + \beta_{t})^{2} \leq \left\{ \sqrt{\sum \alpha_{t}^{2}} + \sqrt{\sum \beta_{t}^{2}} \right\}^{2},
\end{equation*}
and hence
\begin{equation*}
\left\{ \sum (\alpha_{t} + \beta_{t})^{2} \right\}^{1/2} \leq \left\{ \sum \alpha_{t}^{2} \right\}^{1/2} + \left\{ \sum \beta_{t}^{2} \right\}^{1/2},
\end{equation*}
as was claimed.
\qed
\end{proof}

\begin{remark}
\label{remark3}
The inequalities proved in Lemma~\ref{lemma1} and \ref{lemma2} imply that if $\tau \neq m$ there is no \textquotedblleft straight line\textquotedblright connecting $\tau$ and $m$, in that there does not exist $g$ between $\tau$ and $m$ for which the triangle inequality is an equality.
\end{remark}

Therefore, the following proposition holds.

\begin{proposition}
\label{proposition4}
The symmetric chi-squared distance $S^{2}(\tau, m)$ is indeed a metric.
\end{proposition}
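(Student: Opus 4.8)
The plan is to verify that $S^{2}(\tau,m)$ satisfies all the defining properties of a metric, most of which have already been established in the preceding material, so that the proof amounts to assembling these pieces. Recall that a nonnegative function $\rho$ on a set is a metric if (i) $\rho(\tau,m)\geq 0$ with equality if and only if $\tau=m$; (ii) $\rho(\tau,m)=\rho(m,\tau)$; and (iii) the triangle inequality $\rho(\tau,m)\leq\rho(\tau,g)+\rho(g,m)$ holds. Here the relevant function is $d(\tau,m)=\{S^{2}(\tau,m)\}^{1/2}$, i.e.\ the square root of the symmetric chi-squared disparity, since it is this quantity for which the triangle inequality was proved.

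First I would address nonnegativity and the identity of indiscernibles. Each summand $2[\tau(t)-m(t)]^{2}/(\tau(t)+m(t))$ is manifestly nonnegative (interpreting a term as $0$ when $\tau(t)+m(t)=0$), so $S^{2}(\tau,m)\geq 0$ and hence $d(\tau,m)\geq 0$. Moreover $d(\tau,m)=0$ forces every summand to vanish, which gives $\tau(t)=m(t)$ for all $t$, i.e.\ $\tau=m$; the converse is immediate. Second, symmetry is obvious from the form of the denominator and the square in the numerator: $S^{2}(\tau,m)=S^{2}(m,\tau)$, hence $d(\tau,m)=d(m,\tau)$. Third, the triangle inequality $d(\tau,m)\leq d(\tau,g)+d(g,m)$ is exactly the content of Lemma~\ref{lemma3}.

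Thus the body of the proof is a three-line invocation: nonnegativity and definiteness from Definition~\ref{definition4}'s formula, symmetry by inspection, and the triangle inequality from Lemma~\ref{lemma3} (which itself rests on Lemmas~\ref{lemma1} and~\ref{lemma2} via the pointwise inequality for $|c-a|/\sqrt{c+a}$ and a Cauchy--Schwarz step). I would close by noting, as in Remark~\ref{remark3}, that strict convexity of $t\mapsto t^{-1/2}$ makes the triangle inequality strict whenever $\tau\neq m$ and no intermediate $g$ lies ``on the line,'' so $d$ is genuinely a metric rather than merely a pseudometric.

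There is essentially no obstacle remaining: all the analytic work was done in Lemmas~\ref{lemma1}--\ref{lemma3}. The only point requiring a word of care is the convention for terms where $\tau(t)+m(t)=0$ (equivalently $\tau(t)=m(t)=0$), where the summand is defined to be $0$; with that convention the sums are well defined over the common support and the argument goes through unchanged. Hence the proposition follows directly.
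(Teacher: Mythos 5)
Your proposal is correct and matches the paper's own route: the paper likewise treats Proposition~\ref{proposition4} as an assembly step, with nonnegativity and definiteness immediate from the definition, symmetry by inspection, and the triangle inequality supplied by Lemma~\ref{lemma3} (built on Lemmas~\ref{lemma1} and~\ref{lemma2}). Your added care about the square root $\{S^{2}\}^{1/2}$ being the actual metric and about vanishing denominators is consistent with the paper's statement of Lemma~\ref{lemma3} and does not change the argument.
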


\runinhead{\textit{A testing interpretation of the symmetric chi-squared distance}:} let $\phi$ be a test function and consider the problem of testing the null hypothesis that the data come from a density $f$ versus the alternative that the data come from $g$. Let $\theta$ be a random variable with value $1$ if the alternative is true and $0$ if the null hypothesis is true. Then

\begin{proposition}
\label{proposition5}
The solution $\phi_{opt}$ to the optimization problem
\begin{equation*}
\min_{\phi} \E_{\pi} [ ( \theta - \phi(x))^{2} ],
\end{equation*}
where $\pi(\theta)$ is the prior probability on $\theta$, given as
\begin{equation*}
\pi(\theta) = \left\{
\begin{array}{r@{\;\;}l}
& 1/2,\quad \text{if $\theta=0$}\\
& 1/2,\quad \text{if $\theta=1$}
\end{array}
\right. ,
\end{equation*}
is not a $0-1$ decision, but equals the posterior expectation of $\theta$ given $X$. That is
\begin{align*}
\phi(t) & = \E(\theta \mid X=t) = \Pr(\theta =1 \mid X=t)
= \frac{ \frac{1}{2}g(t) }{ \frac{1}{2}f(t) + \frac{1}{2}g(t) } = \frac{ g(t) }{ f(t) + g(t) },
\end{align*}
the posterior probability that the alternative is correct.
\end{proposition}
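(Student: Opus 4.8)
The plan is to recognize this as a standard Bayes-risk minimization problem under squared-error loss, where the optimal action is the posterior mean. First I would write out the risk $\E_{\pi}[(\theta-\phi(X))^2]$ by conditioning on $X$, using the tower property: $\E_{\pi}[(\theta-\phi(X))^2] = \E_X\bigl[\E[(\theta-\phi(X))^2 \mid X]\bigr]$. Since the outer expectation is over nonnegative quantities and $\phi(X)$ is a function of $X$ only, it suffices to minimize the inner conditional expectation pointwise in $X=t$, i.e. to find for each $t$ the value $c=\phi(t)$ that minimizes $\E[(\theta-c)^2 \mid X=t]$.

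The next step is the elementary fact that for any random variable $\theta$ with finite second moment, $\E[(\theta-c)^2] = \Var(\theta) + (\E\theta - c)^2$, so the minimizer over $c$ is $c = \E\theta$; applied conditionally, $\phi_{opt}(t) = \E[\theta \mid X=t]$. Since $\theta$ is Bernoulli, $\E[\theta \mid X=t] = \Pr(\theta=1 \mid X=t)$, which is manifestly not a $0$--$1$ valued decision rule in general.

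Finally I would compute this posterior probability explicitly via Bayes' theorem. Under the model, conditional on $\theta=0$ the density of $X$ at $t$ is $f(t)$ and conditional on $\theta=1$ it is $g(t)$, with prior $\pi(0)=\pi(1)=1/2$. Hence
\begin{equation*}
\Pr(\theta=1 \mid X=t) = \frac{\pi(1)\,g(t)}{\pi(0)\,f(t) + \pi(1)\,g(t)} = \frac{\tfrac12 g(t)}{\tfrac12 f(t) + \tfrac12 g(t)} = \frac{g(t)}{f(t)+g(t)},
\end{equation*}
which is the claimed posterior probability that the alternative is correct.

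There is no real obstacle here; the only point requiring a word of care is the justification for the pointwise minimization — namely that minimizing the integrand (conditional risk) for each $t$ does minimize the integral, which is valid because the conditional risk is bounded below (by zero) and the choices at different $t$ are unconstrained, so no interaction across values of $t$ can improve on the pointwise optimum. One should also implicitly note that $\phi$ ranges over all measurable functions of $X$ (not just test functions valued in $\{0,1\}$), which is what makes the posterior mean admissible as a ``solution'' and is exactly the content of the statement that $\phi_{opt}$ is not a $0$--$1$ decision.
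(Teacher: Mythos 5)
Your proof is correct, and it takes a mildly different route from the paper's. You condition on $X$ (tower property), use the standard fact that the conditional mean minimizes conditional squared error, and then compute $\Pr(\theta=1\mid X=t)$ by Bayes' theorem. The paper instead conditions on $\theta$: it expands the Bayes risk as $\tfrac12\E_{H_1}[(1-\phi(X))^2]+\tfrac12\E_{H_0}[\phi^2(X)]=\sum_t\bigl\{\tfrac12(1-\phi(t))^2 g(t)+\tfrac12\phi(t)^2 f(t)\bigr\}$ and minimizes this quadratic pointwise in $\phi(t)$, which yields $\phi_{opt}(t)=g(t)/(f(t)+g(t))$ directly. Your route has the advantage of making the assertion $\phi_{opt}=\E(\theta\mid X)$ — which is itself part of the proposition's statement — immediate, and it works verbatim for an arbitrary prior, with the equal-prior Bayes computation appearing only at the last step; the paper's hypothesis-wise expansion is more computational but is exactly the decomposition reused in Corollary~\ref{corollary2}, where substituting $\phi_{opt}$ into $\tfrac12\E_{H_1}[(1-\phi)^2]+\tfrac12\E_{H_0}[\phi^2]$ gives the minimum risk $\tfrac14\bigl(1-\tfrac{S^2}{4}\bigr)$. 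Your explicit justification of the pointwise minimization and your remark that $\phi$ ranges over all functions of $X$ (not only $0$--$1$ decisions) simply make precise what the paper leaves implicit, so there is no gap on either side.
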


\begin{proof}
\label{proof_of_proposition5}
\smartqed
We have
\begin{equation*}
\E(\theta \mid X) = \frac{1}{2} \E_{H_{1}} [ ( 1-\phi )^{2} ] + \frac{1}{2} \E_{H_{0}} (\phi^{2}).
\end{equation*}

But
\begin{equation*}
\E_{H_{1}} [ ( 1-\phi (X) )^{2} ] = \sum_{t} ( 1-\phi(t) )^{2} g(t),
\end{equation*}
and
\begin{equation*}
\E_{H_{0}} (\phi^{2}(X)) = \sum_{t}\phi^{2}(t)f(t),
\end{equation*}
hence
\begin{equation*}
\phi_{opt}(t) = \frac{ g(t) }{ f(t) + g(t) },
\end{equation*}
as was claimed.
\qed
\end{proof}

\begin{corollary}
\label{corollary2}
The minimum risk is given as
\begin{equation*}
\frac{1}{4} \left(1- \frac{S^{2}}{4} \right),
\end{equation*}
where
\begin{equation*}
S^{2} = S^{2}(f, g) = \sum \frac{ [ f(t) - g(t) ]^{2} }{ \frac{1}{2}f(t) + \frac{1}{2} g(t) }.
\end{equation*}
\end{corollary}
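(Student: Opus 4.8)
The plan is to evaluate the objective $\E_{\pi}[(\theta - \phi(X))^2]$ at the minimizer $\phi_{opt}$ identified in Proposition~\ref{proposition5} and then rewrite the resulting expression in terms of $S^2$. First I would substitute $\phi_{opt}(t) = g(t)/(f(t)+g(t))$, noting that $1 - \phi_{opt}(t) = f(t)/(f(t)+g(t))$, into the decomposition
\begin{equation*}
\E_{\pi}[(\theta - \phi_{opt}(X))^2] = \tfrac12 \E_{H_1}[(1-\phi_{opt}(X))^2] + \tfrac12 \E_{H_0}[\phi_{opt}^2(X)].
\end{equation*}
This gives $\tfrac12 \sum_t \frac{f^2(t) g(t)}{(f(t)+g(t))^2} + \tfrac12 \sum_t \frac{g^2(t) f(t)}{(f(t)+g(t))^2}$; combining the two sums over a common denominator, one factor of $f(t)+g(t)$ cancels and the minimum risk collapses to the compact form $\tfrac12 \sum_t \frac{f(t)g(t)}{f(t)+g(t)}$.

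The second step is purely algebraic: I would express $\sum_t \frac{f(t)g(t)}{f(t)+g(t)}$ through $S^2$. The identity $[f(t)-g(t)]^2 = [f(t)+g(t)]^2 - 4 f(t) g(t)$ yields
\begin{equation*}
\frac{[f(t)-g(t)]^2}{f(t)+g(t)} = [f(t)+g(t)] - \frac{4 f(t) g(t)}{f(t)+g(t)}.
\end{equation*}
Summing over $t$, using $\sum_t [f(t)+g(t)] = 2$ and the definition $S^2 = 2\sum_t \frac{[f(t)-g(t)]^2}{f(t)+g(t)}$, one gets $\tfrac{S^2}{2} = 2 - 4\sum_t \frac{f(t)g(t)}{f(t)+g(t)}$, hence $\sum_t \frac{f(t)g(t)}{f(t)+g(t)} = \tfrac12 (1 - S^2/4)$. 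Plugging this back into the expression from the first step gives the claimed minimum risk $\tfrac14(1 - S^2/4)$.

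There is no serious obstacle here; the only thing to get right is the bookkeeping of constants (the factor $2$ in the definition of $S^2$ versus the factor $\tfrac12$ in the averaging prior), and the one non-mechanical observation is the identity $[f-g]^2 = [f+g]^2 - 4fg$ that converts the risk into the symmetric chi-squared form. I would also note in passing that, since $0 \le S^2 \le 4$ with the upper bound attained only when $f$ and $g$ have disjoint supports, the minimum risk ranges over $[0, \tfrac14]$, which is consistent with interpreting $\tfrac14$ as the Bayes risk of pure guessing in the degenerate case $f = g$.
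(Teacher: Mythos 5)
Your proposal is correct and follows essentially the same route as the paper: substitute $\phi_{opt}$ to reduce the risk to $\tfrac12\sum_t \frac{f(t)g(t)}{f(t)+g(t)}$, then convert this to $S^2$ via the identity $[f-g]^2=[f+g]^2-4fg$, which is exactly the paper's computation with $A=\sum\frac{[f+g]^2}{f+g}=2$ and $B=\sum\frac{[f-g]^2}{f+g}=\tfrac12 S^2$. The constants all check out, so no changes are needed.
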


\begin{proof}
\label{proof_of_corollary2}
\smartqed
Substitute $\phi_{opt}$ in $\E_{\pi}[ ( \theta - \phi )^{2} ]$ to obtain
\begin{equation*}
\E_{\pi}[ ( \theta - \phi_{opt} )^{2} ] =\frac{1}{2} \sum_{t} \frac{f(t) g(t)}{ f(t) + g(t) }.
\end{equation*}

Now set
\begin{align*}
A = \sum_{t} \frac{ [f(t) + g(t) ]^{2} }{ f(t) + g(t)} = 2, \text{ \ }
B = \sum_{t} \frac{ [ f(t) - g(t) ]^{2} }{ f(t) + g(t)} = \frac{1}{2} S^{2}.
\end{align*}

Then
\begin{equation*}
A-B = 4 \sum_{t} \frac{f(t) g(t)}{ f(t) + g(t) } = 2 - \frac{1}{2} S^{2},
\end{equation*}
or, equivalently,
\begin{equation*}
\sum_{t} \frac{f(t) g(t)}{ f(t) + g(t) } = \frac{1}{4} \left(2 - \frac{1}{2} S^{2} \right).
\end{equation*}

Therefore
\begin{align*}
\E_{\pi}[ ( \theta - \phi_{opt} )^{2} ] & = \frac{1}{2} \sum_{t} \frac{f(t) g(t)}{ f(t) + g(t) }
 = \frac{1}{4} \left(1- \frac{S^{2}}{4} \right),
\end{align*}
as was claimed.
\qed
\end{proof}

\begin{remark}
\label{remark4}
Note that $S^{2}(f, g)$ is bounded above by $4$; it becomes equal to $4$ when $f$, $g$ are mutually singular.
\end{remark}

The Kullback-Leibler and Hellinger distances are extensively used in the literature. Yet, we argue that, because they are obtained as solutions to optimization problems with non-interpretable (statistically) constraints, are not appropriate for our purposes. However, we note here that the Hellinger distance is closely related to the symmetric chi-squared distance, although this is not immediately obvious. We elaborate on this statement below.

\begin{definition}
\label{definition7}
Let $\tau$, $m$ be two probability mass functions. The squared Hellinger distance is defined as
\begin{equation*}
H^{2} (\tau, m) = \frac{1}{2} \sum_{x} \left[ \sqrt {\tau(x)} - \sqrt{m(x)} \right]^{2}.
\end{equation*}
\end{definition}

We can more readily see the relationship between the Hellinger and chi-squared distances if we rewrite $H^{2} (\tau, m)$ as
\begin{equation*}
H^{2} (\tau, m) = \frac{1}{2} \sum_{x} \frac{ [ \tau(x) - m(x)]^{2} }{ [ \sqrt {\tau(x)} + \sqrt{m(x)}]^{2} }.
\end{equation*}

\begin{lemma}
\label{lemma4}
The Hellinger distance is bounded by the symmetric chi-squared distance, that is,
\begin{equation*}
\frac{1}{8} S^{2} \leq H^{2} \leq \frac{1}{4} S^{2},
\end{equation*}
where $S^{2}$ denotes the symmetric chi-squared distance.
\end{lemma}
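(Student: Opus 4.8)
The plan is to prove the two bounds termwise and then sum over $x$. Using the factorization $\tau(x)-m(x)=[\sqrt{\tau(x)}-\sqrt{m(x)}]\,[\sqrt{\tau(x)}+\sqrt{m(x)}]$, I would rewrite the generic summand of the symmetric chi-squared distance as
\[
\frac{2[\tau(x)-m(x)]^{2}}{\tau(x)+m(x)}=\frac{2[\sqrt{\tau(x)}-\sqrt{m(x)}]^{2}\,[\sqrt{\tau(x)}+\sqrt{m(x)}]^{2}}{\tau(x)+m(x)},
\]
and, using the form of $H^{2}$ displayed just above the lemma, the generic summand of $H^{2}$ simply as $\tfrac12[\sqrt{\tau(x)}-\sqrt{m(x)}]^{2}$. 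Writing $a=\sqrt{\tau(x)}$ and $b=\sqrt{m(x)}$, the claimed inequality $\tfrac18 S^{2}\le H^{2}\le\tfrac14 S^{2}$ follows as soon as one verifies, for every $x$ with $\tau(x)+m(x)>0$,
\[
\frac{1}{8}\cdot\frac{2(a-b)^{2}(a+b)^{2}}{a^{2}+b^{2}}\;\le\;\frac{1}{2}(a-b)^{2}\;\le\;\frac{1}{4}\cdot\frac{2(a-b)^{2}(a+b)^{2}}{a^{2}+b^{2}}.
\]
Terms with $\tau(x)+m(x)=0$ contribute zero to each of $S^{2}$ and $H^{2}$ and are discarded, and terms with $\tau(x)=m(x)$ are trivial since all three expressions above vanish.

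Next I would divide the displayed chain by the common factor $(a-b)^{2}>0$ and clear the denominator $a^{2}+b^{2}$, reducing matters to the elementary numerical statement
\[
\tfrac12(a+b)^{2}\;\le\;a^{2}+b^{2}\;\le\;(a+b)^{2},\qquad a,b\ge 0.
\]
The right-hand inequality is $a^{2}+b^{2}\le(a+b)^{2}$, i.e.\ $2ab\ge 0$; the left-hand inequality is $(a+b)^{2}\le 2(a^{2}+b^{2})$, i.e.\ $(a-b)^{2}\ge 0$. Both hold for all nonnegative $a,b$. Summing the resulting termwise inequalities over $x\in\mathscr{T}$ then yields $\tfrac18 S^{2}\le H^{2}\le\tfrac14 S^{2}$, as claimed.

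There is no genuine obstacle here: the lemma is in essence a pointwise algebraic identity presented as a pair of inequalities, and the bound on the ratio of summands, $(\tau(x)+m(x))/[\sqrt{\tau(x)}+\sqrt{m(x)}]^{2}\in[\tfrac12,1]$, is the whole content. The only points deserving a sentence of care are the bookkeeping for the degenerate terms noted above and the remark that one is entitled to reduce to $a,b\ge 0$ because square roots of densities are nonnegative and the cancelled factor $(a-b)^{2}$ is strictly positive exactly on the terms that actually contribute to the sums.
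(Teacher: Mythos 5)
Your proof is correct and is essentially the paper's argument: your reduced inequality $\tfrac12(a+b)^{2}\le a^{2}+b^{2}\le(a+b)^{2}$ with $a=\sqrt{\tau(x)}$, $b=\sqrt{m(x)}$ is exactly the paper's pointwise bound $\tau(x)+m(x)\le\bigl(\sqrt{\tau(x)}+\sqrt{m(x)}\bigr)^{2}\le 2[\tau(x)+m(x)]$, applied termwise and summed. The only cosmetic slip is attributing the summand $\tfrac12[\sqrt{\tau(x)}-\sqrt{m(x)}]^{2}$ to the display just above the lemma, when it is the form in Definition~\ref{definition7}; this does not affect the argument.
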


\begin{proof}
\label{proof_of_lemma45}
\smartqed
Note that
\begin{align*}
\left( \sqrt{\tau(x)} + \sqrt{ m(x) } \right)^{2} &= \tau(x) + m(x) + 2 \sqrt{ \tau(x) m(x) }
 \geq \tau(x) + m(x).
\end{align*}

Also
\begin{equation*}
\left( \sqrt{\tau(x)} + \sqrt{ m(x) }  \right)^{2} \leq 2[ \tau(x) + m(x) ],
\end{equation*}
and putting these relationships together we obtain
\begin{align*}
\tau(x) + m(x) & \leq \left( \sqrt{\tau(x)} + \sqrt{ m(x) } \right)^{2}
 \leq 2 [ \tau(x) + m(x) ].
\end{align*}

Therefore
\begin{equation*}
H^{2}(\tau, m) \leq \frac{1}{2} \sum \frac{ [ \tau(x)-m(x) ]^{2} }{ \tau(x)+m(x) } = \frac{1}{4} S^{2},
\end{equation*}
and
\begin{equation*}
H^{2}(\tau, m) \geq \frac{1}{2} \sum \frac{ [ \tau(x)-m(x) ]^{2} }{ 2[\tau(x)+m(x)] } = \frac{1}{8} S^{2},
\end{equation*}
and so
\begin{equation*}
\frac{1}{8} S^{2} (\tau, m) \leq H^{2} (\tau, m) \leq \frac{1}{4} S^{2} (\tau, m),
\end{equation*}
as was claimed.
\qed
\end{proof}


\subsection{Locally Quadratic Distances}
\label{subsection3.4}

A generalization of the chi-squared distances is offered by the locally quadratic distances. We have the following definition.

\begin{definition}
\label{definition8}
A locally quadratic distance between two densities $\tau$, $m$ has the form
\begin{equation*}
\rho(\tau, m) = \sum K_{m} (x, y)[\tau(x)-m(x)][\tau(y)-m(y)],
\end{equation*}
where $K_{m}(x, y)$ is a nonnegative definite kernel, possibly dependent on $m$, and such that
\begin{equation*}
\sum_{x, y} a(x)K_{m}(x, y)a(y) \geq 0,
\end{equation*}
for all functions $a(x)$.
\end{definition}

\begin{example}
\label{example1}
The Pearson distance can be written as
\begin{align*}
\sum \frac{ ( d(t) - m(t) )^{2} }{m(t)} &= \sum \frac{\mathds{1} [s=t]}{ \sqrt{m(s)m(t)}} [d(s) - m(s) ][d(t) - m(t) ] \\
&= \sum K_{m} (s, t) [d(s) - m(s) ][d(t) - m(t) ],
\end{align*}
where $\mathds{1} (\cdot)$ is the indicator function. It is a quadratic distance with kernel
\begin{equation*}
K_{m}(s, t) = \frac{\mathds{1} [s=t]}{ \sqrt{m(s)m(t)}}.
\end{equation*}
\end{example}

\subruninhead{Sensitivity and Robustness:} In the classical robustness literature one of the attributes that a method should exhibit so as to be characterized as robust is the attribute of being resistant, that is insensitive, to the presence of a moderate number of outliers and to inadequacies in the assumed model.

Similarly here, to characterize a statistical distance as robust it should be insensitive to small changes in the true density, that is, the value of the distance should not be greatly affected by small changes that occur in $\tau$. Lindsay \cite{Lindsay1994}, Markatou \cite{Markatou2000, Markatou2001}, and Markatou et al. \cite{Markatou1997, Markatou1998} based the discussion of robustness of the distances under study on a mechanism that allows the identification of distributional errors, that is, on the Pearson residual. A different system of residuals is the set of symmetrized residuals defined as follows.

\begin{definition}
\label{definition9}
If $\tau$, $m$ are two densities the symmetrized residual is defined as
\begin{equation*}
r_{sym}(t) = \frac{ \tau(t) - m(t)}{ \tau(t) + m(t) }.
\end{equation*}
\end{definition}

The symmetrized residuals have range $[-1, 1]$, with value $-1$ when $\tau(t) = 0$ and value 1 when $m(t) = 0$. Symmetrized residuals are important because they allow us to understand the way different distances treat different distributions.

The symmetric chi-squared distance can be written as a function of the symmetrized residuals as follows
\begin{align*}
S^{2}(\tau, m) &= 4 \sum \left(\frac{1}{2}\tau(t) + \frac{1}{2}m(t) \right) \left\{ \frac{ \tau(t) - m(t) }{ \tau(t)  + m(t) } \right\}^{2}
 = 4 \sum b(t) r_{sym}^{2}(t),
\end{align*}
where $b(t) = [\tau(t) + m(t)]/2 $.

The aforementioned expression of the symmetric chi-squared distance allows us to obtain inequalities between $S^{2}(\tau, m)$ and other distances.

\bigbreak
A third residual system is the set of logarithmic residuals, defined as follows.

\begin{definition}
\label{definition10}
Let $\tau$, $m$ be two probability mass functions. Define the logarithmic residuals as
\begin{equation*}
\delta(t) = \log \left( \frac{ \tau(t) }{ m(t) } \right),
\end{equation*}
with $\delta \in (-\infty, \infty) $.
\end{definition}

A value of this residual close to $0$ indicates agreement between $\tau$ and $m$. Large positive or negative values indicate disagreement between the two models $\tau$ and $m$.

\bigbreak
In an analysis of a given data set, there are two types of observations that cause concern: outliers and influential observations. In the literature, the concept of an outlier is defined as follows.

\begin{definition}
\label{definition11}
We define an outlier to be an observation (or a set of observations) which appears to be inconsistent with the remaining observations of the data set.
\end{definition}

Therefore, the concept of an outlier may be viewed in relative terms. Suppose we think a sample arises from a standard normal distribution. An observation from this sample is an outlier if it is somehow different in relation to the remaining observations that were generated from the postulated standard normal model. This means that, an observation with value $4$ may be surprising in a sample of size $10$, but is less so if the sample size is $10000$. In our framework therefore, the extent to which an observation is an outlier depends on both the sample size and the probability of occurrence of the observation under the specified model.

\begin{remark}
\label{remark5}
Davies and Gather \cite{DaviesGather1993} state that although detection of outliers is a topic that has been extensively addressed in the literature, the word \textquotedblleft outlier\textquotedblright was not given a precise definition. Davies and Gather \cite{DaviesGather1993} formalized this concept by defining outliers in terms of their position relative to a central model, and in relationship to the sample size. Further details can be found in their paper.
\end{remark}

On the other hand, the literature provides the following definition of an influential observation.

\begin{definition}
\label{definition12}
(Belsley et al. \cite{Belsley1980}) An influential observation is one which, either individually or together with several other observations, has a demonstrably larger impact on the calculated values of various estimates than is the case for most of the other observations.
\end{definition}

Chatterjee and Hadi \cite{ChatterjeeHadi1986} use this definition to address questions about measuring influence and discuss the different measures of influence and their inter-relationships.

The aforementioned definition is subjective, but it implies that one can order observations in a sensible way according to some measure of influence. Outliers need not be influential observations and influential observations need not be outliers. Large Pearson residuals correspond to observations that are \textit{surprising}, in the sense that they occur in locations with small model probability. This is different from influential observations, that is from observations for which their presence or absence greatly affects the value of the maximum likelihood estimator.

Outliers can be surprising observations as well as influential observations. In a normal location-scale model, an outlying observation is both surprising and influential on the maximum likelihood estimator of location. But in the double exponential location model, an outlying observation is possible to be surprising but never influential on the maximum likelihood estimator of location as it equals the median.

Lindsay \cite{Lindsay1994} shows that the robustness of these distances is expressed via a key function called \textit{residual adjustment function} (RAF). Further, he studied the characteristics of this function and showed that an important class of RAFs is given by $A_{\lambda}(\delta)=\frac{ (1+\delta)^{\lambda} -1 }{ \lambda + 1 }$, where $\delta$ is the Pearson residual (defined by equation~(\ref{equation1})). From this class we obtain many RAFs; in particular, when $\lambda = -2$ we obtain the RAF corresponding to Neyman's chi-squared distance. For details, see Lindsay \cite{Lindsay1994}.


\section{The Continuous Setting}
\label{secction4}

Our goal is to use statistical distances to construct model misspecification measures. One of the key issues in the construction of misspecification measures in the case of data being realizations of a random variable that follows a continuous distribution is that allowances should be made for the scale difference between observed data and hypothesized model. That is, data distributions are discrete while the hypothesized model is continuous. Hence, we require the distance to exhibit discretization robustness, so it can account for the difference in scale.

To achieve discretization robustness, we need a sensitive distance, which implies a need to balance sensitivity and statistical noise. We will briefly review available strategies to deal with the problem of balancing sensitivity of the distance and statistical noise.

In what follows, we discuss desirable characteristics we require our distance measures to satisfy.


\subsection{Desired Features}
\label{subsection4.1}

\subruninhead{Discretization Robustness:} Every real data distribution is discrete, and therefore is different from every continuous distribution. Thus, a reasonable distance measure must allow for discretization, by saying that the discretized version of a continuous distribution must get closer to the continuous distribution as the discretization gets finer.

A second reason for requiring discretization robustness is that we will want to use the empirical distribution to estimate the true distribution, but without this robustness, there is no hope that the discrete empirical distribution will be closed to any model point.

\subruninhead{The Problem of Too Many Questions:} Thus, to achieve discretization robustness, we need to construct a sensitive distance. This requirement dictates us to carry out a delicate balancing act between sensitivity and statistical noise.

Lindsay \cite{Lindsay2004} discusses in detail the problem of too many questions. Here we only note that to illustrate the issue Lindsay \cite{Lindsay2004} uses the chi-squared distance and notes that the statistical implications of a refinement in partition are the widening of the sensitivity to model departures in new \textquotedblleft directions\textquotedblright \hspace{0.02cm} but, at the same time, this act increases the statistical noise and therefore decreases the power of the chi-squared test in every existing direction.

There are a number of ways to address this problem, but they all seem to involve a loss of statistical information. This means we cannot ask all model fit questions with optimal accuracy. Two immediate solutions are as follows. First, limit the investigation only to a finite list of questions, essentially boiling down to prioritizing the questions asked of the sample. A number of classical goodness-of-fit tests create exactly such a balance. A second approach to the problem of answering infinitely many questions with only a finite number of data points is through the construction of kernel smoothed density measures. Those measures provide a flexible class of distances that allows for adjusting the sensitivity/noise trade-off. Before we briefly comment on this strategy, we discuss statistical distances between continuous probability distributions.


\subsection{The $\pmb{L_{2}}$-Distance}
\label{subsection4.2}

The $L_{2}$ distance is very popular in density estimation. We show below that this distance is not invariant to one-to-one transformations.

\begin{definition}
\label{definition13}
The $L_{2}$ distance between two probability density functions $\tau$, $m$ is defined as
\begin{equation*}
L_{2}^{2} (\tau, m) = \int [ \tau(x) - m(x) ]^{2} dx.
\end{equation*}
\end{definition}

\begin{proposition}
\label{proposition6}
The $L_{2}$ distance between two probability density functions is not invariant to one-to-one transformations.
\end{proposition}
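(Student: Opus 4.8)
The plan is to exhibit a single explicit counterexample, since a proposition of the form ``$X$ is not invariant'' only requires one instance where invariance fails. I would take the simplest possible concrete pair of densities on a bounded interval together with a simple one-to-one transformation, compute the $L_2^2$ distance before and after, and observe that the two numbers differ.

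Concretely, I would work on $[0,1]$ and let $\tau$ be the uniform density, $\tau(x)=1$, and let $m$ be some non-uniform density, for instance $m(x)=2x$. A direct integration gives $L_2^2(\tau,m)=\int_0^1 (1-2x)^2\,dx = 1/3$. Now apply a one-to-one, smooth, strictly monotone change of variable $y = \psi(x)$; the transformed densities are $\tilde\tau(y) = \tau(\psi^{-1}(y))\,|(\psi^{-1})'(y)|$ and similarly for $m$, and one computes $L_2^2(\tilde\tau,\tilde m) = \int [\tilde\tau(y)-\tilde m(y)]^2\,dy$. The point is that the Jacobian factor $|(\psi^{-1})'(y)|$ enters \emph{quadratically} inside the integrand but the change-of-variables only supplies \emph{one} power of the Jacobian, so the weighting does not cancel and the value changes in general. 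I would pick $\psi$ so that the arithmetic is painless --- e.g. a piecewise-linear $\psi$ that stretches part of $[0,1]$ and compresses the rest, or even a linear rescaling onto a different interval $[0,c]$, for which the $L_2^2$ distance scales by $1/c$ --- and simply display the two unequal numbers.

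The cleanest version may actually be the rescaling argument: if $y = cx$ maps $[0,1]$ onto $[0,c]$, then a density $f$ on $[0,1]$ transforms to $f(y/c)/c$ on $[0,c]$, and $\int_0^c [f(y/c)/c - g(y/c)/c]^2\,dy = (1/c)\int_0^1 [f(x)-g(x)]^2\,dx$. Thus $L_2^2(\tilde\tau,\tilde m) = (1/c)\,L_2^2(\tau,m) \neq L_2^2(\tau,m)$ whenever $c \neq 1$ and $\tau \neq m$. This already disproves invariance, and no second density even needs to be non-uniform for the structure of the argument, though one needs $\tau\not\equiv m$ so the distance is nonzero.

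There is no real obstacle here; the only thing to be careful about is bookkeeping the Jacobian correctly so that the reader sees precisely why the quadratic integrand breaks invariance (the same phenomenon that makes $\int f^2$ not a ``natural'' object on a manifold without a reference measure, unlike $\int f$ or the Hellinger integrand $\int(\sqrt f)^{\,2}$-type expressions). I would state the transformation rule for densities once, substitute, and let the mismatch of Jacobian powers do the work, closing with the explicit numerical inequality from the chosen example.
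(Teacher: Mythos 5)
Your proposal is correct, and it takes a somewhat different route from the paper. The paper's proof is a general change-of-variables computation: for an arbitrary one-to-one $Y=a(X)$ with inverse $b$, it writes $L_{2}^{2}(\tau_{Y},m_{Y})=\int[\tau_{X}(x)-m_{X}(x)]^{2}\,b^{\prime}(a(x))\,dx$ and observes that the leftover Jacobian factor $b^{\prime}(a(x))$ makes this differ, in general, from $\int[\tau_{X}(x)-m_{X}(x)]^{2}dx$ --- the same ``one power of the Jacobian left over'' phenomenon you identify. Your version instead exhibits a single concrete counterexample (a linear rescaling $y=cx$, under which $L_{2}^{2}$ scales by $1/c$), which is logically all that a non-invariance claim requires, and is in fact a bit tighter than the paper's argument as written: the paper's final ``$\neq$'' cannot hold for \emph{every} one-to-one transformation (translations, or any map with $b^{\prime}\equiv 1$, clearly preserve the distance, as the paper itself concedes in Remark~\ref{remark6}), whereas your explicit choice of $c\neq 1$ with $\tau\not\equiv m$ settles the matter unambiguously; note your scaling observation is precisely the second half of Remark~\ref{remark6}. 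What the paper's general computation buys in exchange is a complete description of how the distance transforms, making visible exactly which transformations do or do not preserve it, rather than just certifying that some transformation breaks it. One small caution in your write-up: if you use the example $m(x)=2x$ on $[0,1]$, the value is $\int_{0}^{1}(1-2x)^{2}dx=1/3$, which you have right, but as you note the non-uniform $m$ is not needed once you run the rescaling argument --- only $\tau\not\equiv m$ is.
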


\begin{proof}
\label{proof_of_proposition6}
\smartqed
Let $Y=a(X)$ be a transformation of $X$, which is one-to-one. Then $x=b(y)$, $b(.)$ is the inverse transformation of $a(.)$, and
\begin{align*}
L_{2}^{2} (\tau_{Y}, m_{Y}) &= \int [ \tau_{Y}(y) - m_{Y}(y) ]^{2} dy \\
&= \int [ \tau_{X}(b(y)) - m_{X}(b(y)) ]^{2} ( b^{\prime}(y) )^{2} dy \\
&= \int [ \tau_{X}(x) - m_{X}(x) ]^{2} ( b^{\prime}(a(x)) )^{2}  a^{\prime}(x) dx \\
&= \int [ \tau_{X}(x) - m_{X}(x) ]^{2} b^{\prime}(a(x)) dx \\
&\neq \int [ \tau_{X}(x) - m_{X}(x) ]^{2} dx = L_{2}^{2} (\tau_{X}, m_{X}).
\end{align*}
Thus, the $L_{2}$ distance is not invariant under monotone transformations.
\qed
\end{proof}

\begin{remark}
\label{remark6}
It is easy to see that the $L_{2}$ distance is location invariant. Moreover, scale changes appear as a constant factor multiplying the $L_{2}$ distance.
\end{remark}


\subsection{The Kolmogorov-Smirnov Distance}
\label{subsection4.3}

We now discuss the Kolmogorov-Smirnov distance used extensively in goodness-of-fit problems, and present its properties.

\begin{definition}
\label{definition14}
The Kolmogorov-Smirnov distance between two cumulative distribution functions $F$, $G$ is defined as
\begin{equation*}
\rho_{KS} (F, G) = \sup_{x} \left| F(x) - G(x) \right|.
\end{equation*}
\end{definition}

\begin{proposition}
\label{propostion7}
(Testing Interpretation) Let $H_{0}: \tau = f$ versus $H_{1}: \tau = g$ and that only test functions $\varphi$ of the form $\mathds{1} (x \leq x_{0})$ or $\mathds{1} (x > x_{0})$ for arbitrary $x_{0}$ are allowed. Then
\begin{equation*}
\rho_{KS} (F, G) = \sup \left| \E_{H_{1}}[\varphi(X)] - \E_{H_{0}}[\varphi(X)] \right|.
\end{equation*}
\end{proposition}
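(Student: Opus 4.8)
The plan is to evaluate the right-hand side directly, since the class of admissible test functions is so restricted that the supremum can be computed in closed form. For each threshold $x_0$ I would compute $\E_{H_0}[\varphi(X)]$ and $\E_{H_1}[\varphi(X)]$ for the two allowed shapes of $\varphi$, and then observe that the resulting family of values, indexed by $x_0$, is exactly $\{\,|F(x_0)-G(x_0)| : x_0 \in \RR\,\}$, whose supremum is $\rho_{KS}(F,G)$ by Definition~\ref{definition14}.

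First, take $\varphi(x) = \mathds{1}(x \leq x_0)$. Since $H_0$ asserts $\tau = f$, we get $\E_{H_0}[\varphi(X)] = \Pr_f(X \leq x_0) = F(x_0)$ and, likewise, $\E_{H_1}[\varphi(X)] = G(x_0)$, so $\bigl|\E_{H_1}[\varphi(X)] - \E_{H_0}[\varphi(X)]\bigr| = |F(x_0) - G(x_0)|$. Next, take $\varphi(x) = \mathds{1}(x > x_0) = 1 - \mathds{1}(x \leq x_0)$; then $\E_{H_0}[\varphi(X)] = 1 - F(x_0)$ and $\E_{H_1}[\varphi(X)] = 1 - G(x_0)$, and the absolute difference is once more $|F(x_0) - G(x_0)|$. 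Hence both admissible forms contribute exactly the values $|F(x_0) - G(x_0)|$, and taking the supremum over all admissible $\varphi$ --- equivalently over all $x_0 \in \RR$ --- gives $\sup_{x_0}|F(x_0) - G(x_0)| = \rho_{KS}(F,G)$.

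I do not expect a genuine obstacle: the argument is a one-line computation repeated for the two cases. The only point worth spelling out is that enlarging the class from the one-sided indicators $\mathds{1}(x \leq x_0)$ to include their complements $\mathds{1}(x > x_0)$ does not change the supremum, which is immediate since complementing $\varphi$ leaves $\bigl|\E_{H_1}[\varphi(X)] - \E_{H_0}[\varphi(X)]\bigr|$ invariant. One could also add a remark that the supremum need not be attained unless $F-G$ achieves its extreme value, but attainment plays no role in the stated identity.
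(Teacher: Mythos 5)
Your proposal is correct and follows essentially the same route as the paper: for an indicator test function the difference between power ($\E_{H_1}[\varphi]$) and size ($\E_{H_0}[\varphi]$) is $G(x_0)-F(x_0)$, and taking the supremum of its absolute value over $x_0$ gives $\rho_{KS}(F,G)$. Your only addition is spelling out explicitly that the complementary indicators $\mathds{1}(x>x_0)$ contribute the same values, which the paper leaves implicit.
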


\begin{proof}
\label{proof_of_proposition7}
\smartqed
The difference between power and size of the test is $G(x_{0}) - F(x_{0})$. Therefore,
\begin{align*}
\sup_{x_{0}} \left| G(x_{0}) - F(x_{0}) \right| &= \sup_{x_{0}} \left| F(x_{0}) - G(x_{0}) \right|
=  \rho_{KS} (F, G),
\end{align*}
as was claimed.
\qed
\end{proof}

\begin{proposition}
\label{proposition8}
The Kolmogorov-Smirnov distance is invariant under monotone transformations.
\end{proposition}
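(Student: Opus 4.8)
The plan is to show that if $Y = \psi(X)$ for a strictly monotone (hence one-to-one) transformation $\psi$, then the Kolmogorov-Smirnov distance between the distributions of $\tau$ and $m$ is unchanged when both are pushed forward through $\psi$. First I would fix notation: let $F$, $G$ be the cumulative distribution functions of $X$ under $\tau$ and $m$ respectively, and let $F_Y$, $G_Y$ be the cumulative distribution functions of $Y = \psi(X)$ under the corresponding pushed-forward distributions. The goal is $\rho_{KS}(F_Y, G_Y) = \rho_{KS}(F, G)$.

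The key step is the relation between the cumulative distribution functions. If $\psi$ is strictly increasing, then for any $y$ in the range of $\psi$ we have $\{X : \psi(X) \le y\} = \{X : X \le \psi^{-1}(y)\}$, so $F_Y(y) = F(\psi^{-1}(y))$ and likewise $G_Y(y) = G(\psi^{-1}(y))$. Consequently
\begin{equation*}
\sup_y \left| F_Y(y) - G_Y(y) \right| = \sup_y \left| F(\psi^{-1}(y)) - G(\psi^{-1}(y)) \right| = \sup_{x} \left| F(x) - G(x) \right|,
\end{equation*}
where the last equality holds because $x \mapsto \psi^{-1}(y)$ ranges over the full support as $y$ ranges over the range of $\psi$ (and points outside the range contribute differences of $0$ or repeat boundary values). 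For $\psi$ strictly decreasing, $\{X : \psi(X) \le y\} = \{X : X \ge \psi^{-1}(y)\}$, so $F_Y(y) = 1 - F(\psi^{-1}(y)^-)$ and similarly for $G_Y$; the constant $1$ cancels in the difference $F_Y(y) - G_Y(y) = G(\psi^{-1}(y)^-) - F(\psi^{-1}(y)^-)$, and taking absolute values and the supremum again recovers $\rho_{KS}(F, G)$. Alternatively, one can simply invoke Proposition~\ref{propostion7}: the class of test functions $\mathds{1}(x \le x_0)$ is mapped bijectively onto the class $\mathds{1}(y \le y_0)$ (or its complement, for decreasing $\psi$) under the transformation, and expectations of indicator functions are preserved, so the supremum defining $\rho_{KS}$ is the same computed in either coordinate.

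The main obstacle — really the only subtlety — is bookkeeping at the boundary: handling left limits and the jump discontinuities of the cumulative distribution functions correctly in the decreasing case, and making sure that restricting the supremum to the range of $\psi$ does not lose any value of $|F - G|$. This is handled by noting that $F$ and $G$ are right-continuous with left limits and that $|F - G|$ attains values arbitrarily close to its supremum at points that are limits from within the support, which $\psi$ covers; I would state this briefly rather than belabor it. I would therefore organize the proof as: (1) reduce to the strictly increasing case and the strictly decreasing case; (2) write down the cumulative distribution function identities above; (3) conclude the supremum is preserved, remarking that the boundary terms cancel or are negligible.
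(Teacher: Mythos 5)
Your proof is correct, and it follows a slightly different (and in fact more careful) route than the paper. The paper's proof works at the level of densities: it writes $F(x_0)-G(x_0)$ as an integral of $\mathds{1}(x\le x_0)$ against $f-g$, applies the change of variables $x=b(y)$ with $f_Y(y)=f_X(b(y))b^{\prime}(y)$, and uses the identity $\mathds{1}(y\le y_0)=\mathds{1}(x\le b(y_0))$ to conclude that $F_Y(y_0)-G_Y(y_0)=F_X(b(y_0))-G_X(b(y_0))$, hence equal suprema. That argument implicitly assumes absolutely continuous distributions and, through the step equating the two indicator events (and the sign of $b^{\prime}$), really only covers increasing transformations. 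You instead establish the same key identity $F_Y(y)=F(\psi^{-1}(y))$ directly from the event $\{\psi(X)\le y\}=\{X\le\psi^{-1}(y)\}$, which requires no densities, and you explicitly treat the strictly decreasing case, where $F_Y(y)-G_Y(y)=G(\psi^{-1}(y)^-)-F(\psi^{-1}(y)^-)$ and the supremum is recovered via left limits and right-continuity. What the paper's approach buys is brevity and consistency with the density-based computations elsewhere in Section~4; what yours buys is generality (no density assumption, both monotone directions) and a cleaner identification of the only real subtlety, the boundary/left-limit bookkeeping, which you dispose of correctly since $\sup_x|F(x^-)-G(x^-)|=\sup_x|F(x)-G(x)|$ for right-continuous distribution functions. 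Your alternative remark via the testing interpretation of Proposition~\ref{propostion7} is also sound, since the class of half-line indicators is mapped onto itself (or its complements) by a monotone bijection and expectations of indicators are preserved.
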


\begin{proof}
\label{proof_of_proposition8}
\smartqed
Write
\begin{equation*}
F(x_{0}) - G(x_{0}) = \int \mathds{1} (x \leq x_{0}) [ f(x) - g(x) ] dx.
\end{equation*}

Let $Y=a(X)$ be a one-to-one transformation and $b(\cdot)$ be the corresponding inverse transformation. Then $x=b(y)$ and $dy=a^{\prime}(x)dx$, so
\begin{align*}
F_{Y}(y_{0}) - G_{Y}(y_{0}) &= \int \mathds{1} (y \leq y_{0}) [ f_{Y}(y) - g_{Y}(y)] dy \\
&= \int \mathds{1} (y \leq y_{0}) [ f_{X}(b(y))b^{\prime}(y) - g_{X}(b(y))b^{\prime}(y)  ] dy \\
&= \int \mathds{1} (x \leq b(y_{0})) [ f_{X}(b(y))b^{\prime}(y) - g_{X}(b(y))b^{\prime}(y)  ] dy \\
&= \int \mathds{1} (x \leq x_{0}) [ f_{X}(x) - g_{X}(x) ] dx.
\end{align*}

Therefore,
\begin{equation*}
\sup_{y_{0}} \left| F_{Y}(y_{0}) - G_{Y}(y_{0}) \right| = \sup_{x_{0}} \left| F_{X}(x_{0}) - G_{X}(x_{0}) \right|,
\end{equation*}
and the Kolmogorov-Smirnov distance is invariant under one-to-one transformations.
\qed
\end{proof}

\begin{proposition}
\label{propostion9}
The Kolmogorov-Smirnov distance is discretization robust.
\end{proposition}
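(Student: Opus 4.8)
The plan is to pin down the notion of a discretization and then show that the Kolmogorov--Smirnov distance between a continuous distribution function and any of its discretizations tends to zero as the discretization is refined. Fix a continuous distribution function $F$ and a partition $\cdots < t_{-1} < t_{0} < t_{1} < \cdots$ of the real line with mesh $\delta = \sup_{i}(t_{i+1} - t_{i})$. The discretization of $F$ at this scale is the discrete distribution $F_{\Delta}$ that assigns probability $F(t_{i+1}) - F(t_{i})$ to (a representative point of) the cell $(t_{i}, t_{i+1}]$; equivalently, $F_{\Delta}$ is the right-continuous step function that agrees with $F$ at every partition point and is constant on each cell. The claim to be established is that $\rho_{KS}(F, F_{\Delta}) \to 0$ as $\delta \to 0$.

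First I would prove a pointwise bound. Given an arbitrary $x$, let $(t_{i}, t_{i+1}]$ be the cell containing it. Since $F$ is nondecreasing, $F(t_{i}) \le F(x) \le F(t_{i+1})$; since $F_{\Delta}$ is a nondecreasing step function equal to $F$ at $t_{i}$ and $t_{i+1}$, we also have $F(t_{i}) \le F_{\Delta}(x) \le F(t_{i+1})$. Hence $F(x)$ and $F_{\Delta}(x)$ lie in one common interval of length $F(t_{i+1}) - F(t_{i})$, so
\begin{equation*}
\left| F(x) - F_{\Delta}(x) \right| \le F(t_{i+1}) - F(t_{i}).
\end{equation*}
Taking the supremum over $x$ gives $\rho_{KS}(F, F_{\Delta}) \le \sup_{i}\{ F(t_{i+1}) - F(t_{i}) \}$.

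Next I would control the right-hand side by the modulus of continuity of $F$ and let $\delta \to 0$. A continuous distribution function is uniformly continuous on $\RR$: given $\varepsilon > 0$, pick $M$ with $F(-M) < \varepsilon/2$ and $1 - F(M) < \varepsilon/2$, so that the oscillation of $F$ on each tail $(-\infty, -M]$ and $[M, \infty)$ is below $\varepsilon/2$, and combine this with ordinary uniform continuity on the compact interval $[-M, M]$. Consequently $\omega_{F}(\delta) = \sup\{ |F(s) - F(t)| : |s-t| \le \delta \} \to 0$ as $\delta \to 0$, and since every cell has length at most $\delta$ we get $F(t_{i+1}) - F(t_{i}) \le \omega_{F}(\delta)$ for all $i$. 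Therefore $\rho_{KS}(F, F_{\Delta}) \le \omega_{F}(\delta) \to 0$, which is precisely discretization robustness, with the explicit rate $\rho_{KS}(F, F_{\Delta}) \le \omega_{F}(\delta)$.

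The main obstacle is definitional rather than computational: the statement only has content once one fixes what ``discretization'' means, and the argument above uses that the discretization interpolates $F$ at the partition points, so that $F$ and $F_{\Delta}$ are trapped in the same cell-interval. If one instead lumps the cell mass at an arbitrary representative point, the same reasoning yields the bound with $2\omega_{F}(\delta)$ in place of $\omega_{F}(\delta)$, by comparing both $F$ and $F_{\Delta}$ to the interpolating step function. A more abstract route is also available: any refining sequence of discretizations converges pointwise to the continuous $F$, and pointwise convergence of distribution functions to a continuous limit is automatically uniform (P\'olya's theorem), which gives $\rho_{KS} \to 0$ at once; I would note this as a remark but keep the elementary modulus-of-continuity argument, since it also delivers the rate.
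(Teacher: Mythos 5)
Your proof is correct, but it takes a genuinely different route from the paper's. The paper's argument is a one-line structural observation: writing $F(x_{0})-G(x_{0})=\int \mathds{1}(x\leq x_{0})\,d[F(x)-G(x)]$, it views the indicator as a bounded ``smoothing kernel'' integrated against $d(F-G)$, so the distance is well defined and finite for any pair of distributions, discrete or continuous --- in contrast to density-based distances like $L_{2}$, which cannot even compare an empirical distribution to a continuous model. It does not quantify what happens as a discretization is refined. You instead take the definition of discretization robustness stated in subsection~4.1 literally (``the discretized version of a continuous distribution must get closer to the continuous distribution as the discretization gets finer'') and prove it: trapping $F(x)$ and $F_{\Delta}(x)$ in the same cell-interval gives $\rho_{KS}(F,F_{\Delta})\leq\sup_{i}\{F(t_{i+1})-F(t_{i})\}\leq\omega_{F}(\delta)$, and uniform continuity of a continuous c.d.f.\ on $\RR$ (handled correctly via the tails plus compactness) sends this to zero, with an explicit rate; your remark that an arbitrary placement of the cell mass only costs a factor $2$, and the alternative via P\'olya's theorem, are both sound. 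What each buys: the paper's kernel viewpoint is the one that generalizes (it is exactly the perspective used for the kernel-smoothed and quadratic distances later in section~4), while your argument actually verifies the convergence claim, supplies a rate through the modulus of continuity, and makes explicit the definitional point --- what ``discretization'' means --- that the paper leaves implicit.
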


\begin{proof}
\label{proof_of_proposition9}
\smartqed
Notice that we can write
\begin{equation*}
\left| F(x_{0}) - G(x_{0}) \right| = \left| \int \mathds{1} (x \leq x_{0}) d[F(x) - G(x)] \right|,
\end{equation*}
with $\mathds{1} (x \leq x_{0})$ being thought of as a \textquotedblleft smoothing kernel\textquotedblright. Hence, comparisons between discrete and continuous distributions are allowed and the distance is discretization robust.
\qed
\end{proof}

The Kolmogorov-Smirnov distance is a distance based on the probability integral transform. As such, it is invariant under monotone transformations (see proposition~\ref{proposition8}). A drawback of distances based on probability integral transforms is the fact that there is no obvious extension in the multivariate case. Furthermore, there is not a direct loss function interpretation of these distances when the model used is incorrect. In what follows, we discuss chi-squared and quadratic distances that avoid the issues listed above.


\subsection{Exactly Quadratic Distances}
\label{subsection4.4}

In this section we briefly discuss exactly quadratic distances. Rao \cite{Rao1982} introduced the concept of an exact quadratic distance for discrete population distributions and he called it \textit{quadratic entropy}. Lindsay et al. \cite{Lindsay2008} gave the following definition of an exactly quadratic distance.

\begin{definition}
\label{definition15}
(Lindsay et al. \cite{Lindsay2008}) Let $F$, $G$ be two probability distributions, and $K$ is a nonnegative definite kernel. A quadratic distance between $F$, $G$ has the form
\begin{equation*}
\rho_{K}(F, G) = \iint K_{G}(x, y)d(F-G)(x)d(F-G)(y).
\end{equation*}
\end{definition}

Quadratic distances are of interest for a variety of reasons. These include the fact that the empirical distance $\rho_{K}(\widehat{F}, G)$ has a fairly simple asymptotic distribution theory when $G$ identifies with the true model $\tau$, and that several important distances are exactly quadratic (see, for example, Cram\'er-von Mises and Pearson's chi-squared distances). Furthermore, other distances are asymptotically locally quadratic around $G=\tau$. Quadratic distances can be thought of as extensions of the chi-squared distance class.

We can construct an exactly quadratic distance as follows. Let $F$, $G$ be two probability measures that a random variable $X$ may follow. Let $\varepsilon$ be an independent error variable with known density $k_{h}(\varepsilon)$, where $h$ is a parameter. Then, the random variable $Y=X+\varepsilon$ has an absolutely continuous distribution such that
\begin{equation*}
f_{h}^{*}(y) = \int k_{h}(y-x)dF(x), \quad \text{if} \ X \sim F,
\end{equation*}
or
\begin{equation*}
g_{h}^{*}(y) = \int k_{h}(y-x)dG(x), \quad \text{if} \ X \sim G.
\end{equation*}
Let
\begin{equation*}
P^{*2} (F, G) = \int \frac{[ f^{*}(y) - g^{*}(y) ]^{2}}{ g^{*}(y) } dy,
\end{equation*}
be the kernel-smoothed Pearson's chi-squared statistic. In what follows, we prove that $P^{*2} (F, G)$ is an exactly quadratic distance.

\begin{proposition}
\label{proposition10}
The distance $P^{*2} (F, G)$ is an exactly quadratic distance provided that $\iint K(s, t)d(F-G)(s)d(F-G)(t) < \infty$, where $K(s, t) = \int \frac{k_{h}(y-s) k_{h}(y-t)}{g^{*}(y)} dy $.
\end{proposition}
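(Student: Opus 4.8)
The plan is to substitute the convolution formulas $f^{*}(y)=\int k_{h}(y-x)\,dF(x)$ and $g^{*}(y)=\int k_{h}(y-x)\,dG(x)$ directly into the definition of $P^{*2}(F,G)$ and show that the numerator factors into a product of two identical linear functionals of $F-G$. The starting observation is that $f^{*}(y)-g^{*}(y)=\int k_{h}(y-x)\,d(F-G)(x)$, so that
\begin{equation*}
[\,f^{*}(y)-g^{*}(y)\,]^{2} = \left( \int k_{h}(y-s)\,d(F-G)(s) \right)\left( \int k_{h}(y-t)\,d(F-G)(t) \right).
\end{equation*}
Dividing by $g^{*}(y)$ and integrating over $y$, the goal is to interchange the order of the $y$-integration with the two signed-measure integrations in $s$ and $t$, which would yield
\begin{equation*}
P^{*2}(F,G) = \iint \left[ \int \frac{k_{h}(y-s)\,k_{h}(y-t)}{g^{*}(y)}\,dy \right] d(F-G)(s)\,d(F-G)(t) = \iint K(s,t)\,d(F-G)(s)\,d(F-G)(t),
\end{equation*}
with $K(s,t)$ exactly as in the statement.

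Next I would verify that $K(s,t)$ qualifies as the kernel of a quadratic distance in the sense of Definition~\ref{definition15}, i.e.\ that it is symmetric and nonnegative definite. Symmetry is immediate from the symmetric appearance of $s$ and $t$ in the integrand. For nonnegative definiteness, for any function $a(\cdot)$ one computes
\begin{equation*}
\iint a(s)\,K(s,t)\,a(t)\,ds\,dt = \int \frac{1}{g^{*}(y)}\left( \int a(s)\,k_{h}(y-s)\,ds \right)^{2} dy \geq 0,
\end{equation*}
since $g^{*}(y)\geq 0$ and the inner square is nonnegative; this is the same device used in Example~\ref{example1} to exhibit Pearson's distance as quadratic with kernel $\mathds{1}[s=t]/\sqrt{m(s)m(t)}$, here with the Dirac kernel replaced by the smoothing kernel $k_{h}$. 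One should also note $K$ depends on $G$ only through $g^{*}$, matching the $K_{G}$ notation of Definition~\ref{definition15}.

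The main obstacle is the justification of the Fubini--Tonelli interchange of the three integrations, because the integrand $k_{h}(y-s)k_{h}(y-t)/g^{*}(y)$ is not obviously integrable against $|d(F-G)|(s)\,|d(F-G)|(t)\,dy$; the division by $g^{*}(y)$, which can be small in the tails, is what makes this delicate. This is precisely why the hypothesis $\iint K(s,t)\,d(F-G)(s)\,d(F-G)(t)<\infty$ is imposed: it is exactly the finiteness condition needed to legitimize the rearrangement. Concretely, I would split $d(F-G)=dF-dG$ into its four sign-definite pieces (or pass to the Jordan decomposition of $F-G$), apply Tonelli to each nonnegative piece to move the $y$-integral inside, recognize each resulting double integral as $\iint K(s,t)$ against a positive product measure, and then recombine; the stated hypothesis guarantees each such piece is finite, so no $\infty-\infty$ cancellation problem arises and the manipulation is valid. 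Once the interchange is secured the identification of $P^{*2}(F,G)$ with $\rho_{K}(F,G)$ is immediate, and together with the nonnegative-definiteness of $K$ this shows $P^{*2}(F,G)$ is an exactly quadratic distance in the sense of Definition~\ref{definition15}, as claimed.
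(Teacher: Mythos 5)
Your proposal follows essentially the same route as the paper's proof: substitute the convolution representations $f^{*}(y)-g^{*}(y)=\int k_{h}(y-x)\,d(F-G)(x)$, factor the squared numerator into separate $s$- and $t$-integrals, and apply Fubini to extract the kernel $K(s,t)=\int k_{h}(y-s)k_{h}(y-t)/g^{*}(y)\,dy$. The additional material you include --- verifying symmetry and nonnegative definiteness of $K$ and spelling out the Fubini--Tonelli justification via the Jordan decomposition --- is a sensible elaboration of the paper's one-line appeal to Fubini but does not alter the argument.
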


\begin{proof}
\label{proof_of_proposition10}
\smartqed
Write
\begin{align*}
P^{*2} (F, G) &= \int \frac{[ f^{*}(y) - g^{*}(y) ]^{2}}{ g^{*}(y) } dy \\
&= \int \frac{[ \int k_{h}(y-x)dF(x) - \int k_{h}(y-x)dG(x) ]^{2}}{ g^{*}(y) } dy \\
&= \int \frac{[ \int k_{h}(y-x)d(F-G)(x)]^{2}}{ g^{*}(y) } dy \\
&= \int \frac{[ \int k_{h}(y-s)d(F-G)(s)] [ \int k_{h}(y-t)d(F-G)(t)] }{ g^{*}(y) } dy.
\end{align*}

Now using Fubini's theorem, the above relationship can be written as
\begin{align*}
& \iint \left\{ \int \frac{k_{h}(y-s) k_{h}(y-t)}{g^{*}(y)} dy \right\} d(F-G)(s) d(F-G)(t) \\
&= \iint K(s, t) d(F-G)(s) d(F-G)(t),
\end{align*}
with $K(s, t)$ given above.
\qed
\end{proof}

\begin{remark}
\label{remark7}
(a) The issue with many classical measures of goodness-of-fit is that the balance between sensitivity and statistical noise is fixed. On the other hand, one might wish to have a flexible class of distances that allows for adjusting the sensitivity/noise trade-off. Lindsay \cite{Lindsay1994} and Basu and Lindsay \cite{BasuLindsay1994} introduced the idea of smoothing and investigated numerically the blended weighted Hellinger distance, defined as
\begin{equation*}
BWHD_{\alpha} (\tau^{*}, m_{\theta}^{*}) = \int \frac{ ( \tau^{*}(x) - m_{\theta}^{*}(x) )^{2} }{ \left(\alpha \sqrt{\tau^{*}(x)} + \overline{\alpha} \sqrt{ m_{\theta}^{*}(x) } \right)^{2} } dx,
\end{equation*}
where $\overline{\alpha}= 1- \alpha$, $\alpha \in [1/3, 1]$. When $\alpha=1/2$, the $BWHD_{1/2}$ equals the Hellinger distance.

(b) Distances based on kernel smoothing are natural extensions of the discrete distances. These distances are not invariant under one-to-one transformations, but they can be easily generalized to higher dimensions. Furthermore, numerical integration is required for the practical implementation and use of these distances.
\end{remark}


\section{Discussion}
\label{secction5}

In this paper we study statistical distances with a special emphasis on the chi-squared distance measures. We also introduce an extension of the chi-squared distance, the quadratic distance, introduced by Lindsay et al. \cite{Lindsay2008}. We offered statistical interpretations of these distances and showed how they can be obtained as solutions of certain optimization problems. Of particular interest are distances with statistically interpretable constraints such as the class of chi-squared distances. These allow the construction of confidence intervals for models. We further discussed robustness properties of these distances, including discretization robustness, a property that allows discrete and continuous distributions to be arbitrarily close. Lindsay et al. \cite{Lindsay2014}  study the use of quadratic distances in problems of goodness-of-fit with particular focus on creating tools for studying the power of distance-based tests. Lindsay et al. \cite{Lindsay2014} discuss one-sample testing and connect their methodology with the problem of kernel selection and the requirements that are appropriate in order to select optimal kernels. Here, we outlined the foundations that led to the aforementioned work and showed how these elucidate the performance of statistical distances as inferential functions.


\begin{acknowledgement}
The first author dedicates this paper to the memory of Professor Bruce G. Lindsay, a long time collaborator and friend, with much respect and appreciation for his mentoring and friendship. She also acknowledges the Department of Biostatistics, School of Public Health and Health Professions and the Jacobs School of Medicine and Biomedical Sciences, University at Buffalo, for supporting this work. The second author acknowledges the Troup Fund, Kaleida Foundation for supporting this work.
\end{acknowledgement}


%


\begin{thebibliography}{99.}

\bibitem{BasuLindsay1994} Basu, A., Lindsay, B.G.: Minimum disparity estimation for continuous models: Efficiency, distributions and robustness. Ann. Inst. Stat. Math. \textbf{46}, 683--705 (1994)

\bibitem{Belsley1980} Belsley, D.A., Kuh, E., Welsch, R.E.: Regression diagnostics: Identifying influential data and sources of collinearity. John Wiley \& Sons, New York (1980)

\bibitem{Beran1977} Beran, R.: Minimum Hellinger distance estimates for parametric models. Ann. Stat. \textbf{5}, 445--463 (1977)

\bibitem{Berkson1980} Berkson, J.: Minimum chi-square, not maximum likelihood!. Ann. Stat. \textbf{8}, 457--487 (1980)

\bibitem{ChatterjeeHadi1986} Chatterjee, S., Hadi, A.S.:Influential observations, high leverage points, and outliers in linear regression. Stat. Sci. \textbf{1}, 379--393 (1986)

\bibitem{DaviesGather1993} Davies, L., Gather, U.: The identification of multiple outliers. JASA. \textbf{88}, 782--792 (1993)

\bibitem{Lindsay1994} Lindsay, B.G.: Efficiency versus robustness: The case for minimum Hellinger distance and related methods. Ann. Stat. \textbf{22}, 1081--1114 (1994)

\bibitem{Lindsay2004} Lindsay, B.G.: Statistical distances as loss functions in assessing model adequacy. In: The Nature of Scientific Evidence: Statistical, Philosophical and Empirical Considerations, pp.439-488. The University of Chicago Press, Chicago (2004)

\bibitem{Lindsay2008} Lindsay, B.G., Markatou, M., Ray, S., Yang, K., Chen, S.C.: Quadratic distances on probabilities: A unified foundation. Ann. Stat. \textbf{36}, 983--1006 (2008)

\bibitem{Lindsay2014} Lindsay, B.G., Markatou, M., Ray, S.: Kernels, degrees of freedom, and power properties of quadratic distance goodness-of-fit tests. JASA. \textbf{109}, 395--410 (2014)

\bibitem{Markatou2000} Markatou, M.: Mixture models, robustness, and the weighted likelihood methodology. Biometrics. \textbf{56}, 483--486 (2000)

\bibitem{Markatou2001} Markatou, M.: A closer look at weighted likelihood in the context of mixture. In: Charalambides, C.A., Koutras, M.V., Balakrishnan, N. (eds.), Probability and Statistical Models with Applications, pp.447-468. Chapman \& Hall/CRC (2001)

\bibitem{Markatou1997} Markatou, M., Basu, A., Lindsay, B.G.: Weighted likelihood estimating equations: The discrete case with applications to logistic regression. J. Stat. Plan. Inference. \textbf{57}, 215--232 (1997)

\bibitem{Markatou1998} Markatou, M., Basu, A., Lindsay, B.G.: Weighted likelihood equations with bootstrap root search. JASA. \textbf{93}, 740--750 (1998)

\bibitem{Matusita1955} Matusita, K.: Decision rules, based on the distance, for problems of fit, two samples, and estimation. Ann. Math. Stat. \textbf{26}, 613--640 (1955)

\bibitem{Neyman1949} Neyman, J.: Contribution to the theory of the $\chi^{2}$ test. In: Proceedings of the Berkeley Symposium on Mathematical Statistics and Probability, pp.239-273. The University of California Press, Berkeley (1949)

\bibitem{Poor1980} Poor, H.: Robust decision design using a distance criterion. IEEE Trans. Inf. Theory. \textbf{26}, 575--587 (1980)

\bibitem{Rao1963} Rao, C.R.: Criteria of estimation in large samples. Sankhya. Ser. A. \textbf{25}, 189--206 (1963)

\bibitem{Rao1982} Rao, C.R.: Diversity: Its measurement, decomposition, apportionment and analysis. Sankhya. Ser. A. \textbf{44}, 1--21 (1982)

\bibitem{Wald1950} Wald, A.: Statistical decision functions. John Wiley \& Sons, New York (1950)


\end{thebibliography}
\end{document}